\let\@fnsymbol\@arabic
\begin{document}

\def\halmos{\rule{6pt}{6pt}}
\def\ve{\varepsilon}

\newtheorem{theorem}{Theorem}
\newtheorem{lemma}{Lemma}
\newtheorem{claim}[theorem]{Claim}
\newtheorem{cor}[theorem]{Corollary}
\newtheorem{prop}[theorem]{Proposition}
\newtheorem{definition}{Definition}
\newtheorem{question}{Question}
\newtheorem{rem}[theorem]{Remark}
\newtheorem{conj}{Conjecture}

\pagestyle{myheadings}
\markboth{\centerline{G. Soydan, L. N\'emeth, L. Szalay}}{On the Diophantine equation $\sum_{j=1}^kjF_j^p=F_n^q$}

\title{\bf On the Diophantine equation $\sum_{j=1}^kjF_j^p=F_n^q$}

\author{
{\sc G\"okhan~Soydan\footnote{Department of Mathematics, Uluda\u{g} University, G\"{o}r\"{u}kle Kamp\"{u}s, 16059 Bursa, Turkey. \textit{gsoydan@uludag.edu.tr}}, 
L\'aszl\'o~N\'emeth\footnote{University of Sopron, Institute of Mathematics, H-9400, Sopron, Bajcsy-Zs. utca 4., Hungary. \textit{nemeth.laszlo@uni-sopron.hu}},
 L\'aszl\'o~Szalay}\footnote{University of Sopron, Institute of Mathematics, H-9400, Sopron, Bajcsy-Zs. utca 4., Hungary; J. Selye University, Institute of Mathematics and Informatics, 94501, Kom\'arno, Bratislavska cesta 3322, Slovakia. \textit{szalay.laszlo@uni-sopron.hu}}} 

\date{\today}
\date{}

\maketitle

\begin{abstract}
Let $F_n$ denote the $n^{th}$ term of the Fibonacci sequence. In this paper, we investigate the Diophantine equation 
$F_1^p+2F_2^p+\cdots+kF_{k}^p=F_{n}^q$
in the positive integers $k$ and $n$, where $p$ and $q$ are given positive integers. A complete solution is given if the exponents are included in the set $\{1,2\}$. Based on the specific cases we could solve, and a computer search with $p,q,k\le100$ we conjecture that beside the trivial solutions only $F_8=F_1+2F_2+3F_3+4F_4$, $F_4^2=F_1+2F_2+3F_3$, and $F_4^3=F_1^3+2F_2^3+3F_3^3$ satisfy the title equation.\\[1mm]
{\em Key Words: Fibonacci sequence, Diophantine equation.}\\
{\em AMS Classification: 11B39, 11D45.} 
\end{abstract}
\bigskip



\section{Introduction}

As usual, let $(F_n)_{n\ge 0}$  and $(L_n)_{n\ge 0}$ denote the
sequences of Fibonacci and Lucas numbers, respectively, given by
the initial values $F_0=0,~F_1=1,~L_0=2,~L_1=1$, and by the recurrence relations
\begin{equation}\label{erra}
F_{n+2}=F_{n+1}+F_n\qquad {\text{\rm and}}\qquad
L_{n+2}=L_{n+1}+L_{n}\qquad {\text{\rm for~all}}~n\ge 0,
\end{equation}
respectively.  Putting $\alpha=(1+\sqrt{5})/2$ and
$\beta=(1-\sqrt{5})/2=-1/\alpha$ for the two roots of the common
characteristic equation $x^2-x-1=0$ of the two sequences, the formulae 
\begin{equation*}\label{F0} F_n=\frac{\alpha^n-\beta^n}{\alpha-\beta}\qquad
\qquad{\text{\rm and}}\qquad\qquad L_n=\alpha^n+\beta^n
\end{equation*}
hold for all $n\ge 0$. These numbers are well-known for possessing amazing and wonderful properties (consult, for instance, \cite{V} and \cite{T} together with their very rich annotated bibliography for history and additional references). Observing
\begin{eqnarray*}\label{inisols}
F_1 & = & F_2,  \\ 
F_1+2F_2 &=& F_4,  \\
F_1+2F_2+3F_3& =& F_4^2, \\
F_1+2F_2+3F_3+4F_4&=&F_8,
\end{eqnarray*}
the question arises naturally: is there any rule for $F_1+2F_2+3F_3+\cdots +kF_k$? We study this question more generally, according to the title equation.
 
Diophantine equations among the terms of Fibonacci numbers have a very extensive literature. Here we quote a few results that partially motivated us.

By the defining equality (\ref{erra}) of the Fibonacci numbers and the identity $F_{n}^{2}+F_{n+1}^{2}=F_{2n+1}$ (Lemma 1.8), we see that $F_{n}^{s}+F_{n+1}^{s}$ ($n\geq0$) is a Fibonacci number for $s\in\{1,2\}$. For larger $s$ Marques and Togb\'{e} \cite{MT} proved in 2010 that if $F_{n}^{s}+F_{n+1}^{s}$ is a Fibonacci number for all sufficiently large $n$ then $s=1$ or $2$. Next year Luca and Oyono \cite{LO} completed the solution of the question by showing that apart from $F_1^s+F_2^s=F_3$ there is  no solution $s\geq3$ to the equation $F_{n}^{s}+F_{n+1}^{s}=F_m$.

Let $l,s_{1},...,s_{l},a_{1},...,a_{l}$ be integers with  $l\geq1$ and $s_{j}\geq1$. Suppose that there exists $1\leq t \leq l$ such that $a_{t}\neq0$ and $s_{t}>s_{j}$, for all $j\neq t$. Chaves, Marques and Togb\'{e} \cite{CMT}, showed that if either $s_{t}$ is even or $a_{t}$ is not a positive power of $5$, then the sum
\begin{equation*}
a_{1}F_{n+1}^{s_{1}}+a_{2}F_{n+2}^{s_{2}}+...+a_{l}F_{n+l}^{s_{l}}
\end{equation*}
does not belong to the Fibonacci sequence for all sufficiently large $n$.

A balancing problem having similar flavor has been considered by Behera et al.~\cite{B}. They studied the equation
\begin{equation}\label{march21}
F_1^p+F_2^p+\cdots+F_{k-1}^p=F_{k+1}^q+\cdots+F_{k+r}^q,
\end{equation} 
and solved it for the cases $(p,q)=(2,1),(3,1),(3,2)$, and for $2\le p\le q$ by showing the non-existence of any solution. Further the authors conjectured that only the quadruple $(k, r, p, q) = (4, 3, 8, 2)$ of positive integers satisfies (\ref{march21}). The conjecture was completely justified by Alvarado et al.~\cite{A}.
Note that if $(p,q)=(1,1)$ we obtain the problem of sequence balancing numbers handled by Panda \cite{PA}. 

Recalling the formulae $F_1+F_2+\cdots+F_{k}=F_{k+2}-1$ and $F_1^{2}+F_2^{2}+\cdots+F_{k}^{2}=F_{k}F_{k+1}$, it is obvious that the problems
\begin{equation*}
F_1+F_2+\cdots+F_{k}=F_{n}^q, \quad {\rm and}\quad F_1^{2}+F_2^{2}+\cdots+F_{k}^{2}=F_{n}^{q}
\end{equation*}
are rather simple. Indeed, the equations above lead to the lightsome ones
\begin{equation*}\label{F9}
F_{k+2}-1=F_n^q,\quad\quad F_kF_{k+1}=F_n^q.
\end{equation*}
However the equation $F_1^{p}+F_2^{p}+\cdots+F_{k}^{p}=F_{n}^{q}$ might be taken an interest if $p\ge3$. 

The last motivation of our examination was the Diophantine equation
\begin{equation}\label{F11}
x^2+2(x+1)^2+\cdots +n(x+n-1)^2=y^2
\end{equation}
to determine the values of $n$ for which it has finitely or infinitely many positive integer solutions $(x,y)$ (see Wulczyn \cite{W}, and for details, see also \cite{AA}). For variations of the equation \eqref{F11}, we refer the reader to 
\cite{G}.

In this paper, we investigate the Diophantine equation
\begin{equation}\label{main} 
F_1^p+2F_2^p+\cdots+kF_{k}^p=F_{n}^q
\end{equation}
in the positive integers $k$ and $n$, where $p$ and $q$ are fixed positive integers. 
We consider
$$
F_1^p=1=F_1^q=F_2^q,\qquad{\rm and}\qquad F_1^p+2F_2^p=3=F_4
$$
as trivial solutions to (\ref{main}). 
We have the following conjecture based upon the specific cases we could solve, and a computer search with $p,q,k\le100$.
\begin{conj}
The non-trivial solutions to (\ref{main}) are only
\begin{eqnarray*}
F_4^2&=&\,\,9\,=F_1+2F_2+3F_3, \\
F_8&=&21=F_1+2F_2+3F_3+4F_4, \\
F_4^3&=&27=F_1^3+2F_2^3+3F_3^3.
\end{eqnarray*}
\end{conj}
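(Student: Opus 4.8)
Since the statement is a conjecture and not a theorem, what follows is a \emph{programme} rather than a complete argument: I would treat (\ref{main}) as a four-parameter exponential Diophantine equation in $k,n,p,q$ and attack it by the standard triad of sharp size estimates, linear forms in logarithms, and a terminal reduction-plus-search phase. The guiding idea is to use the analytic estimates to pin down the relative sizes of the variables, then to separate two regimes --- $k$ large and $k$ small --- each of which reduces, after fixing one exponential parameter, to a problem that Baker's method can in principle settle.

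The first step is to make the left-hand side explicit via Binet's formula. Expanding $F_j^p$ binomially gives
\[
S_k^{(p)} := \sum_{j=1}^k jF_j^p = \frac{1}{5^{p/2}}\sum_{i=0}^p \binom{p}{i}(-1)^i\sum_{j=1}^k j\, r_i^{\,j},\qquad r_i=\alpha^{\,p-i}\beta^{\,i},
\]
and each inner sum evaluates to $\sum_{j=1}^k j r^j = \bigl(r-(k+1)r^{k+1}+kr^{k+2}\bigr)/(1-r)^2$. Hence $S_k^{(p)}$ is an explicit function of $k$ that is dominated by its top term, $S_k^{(p)}=c_p\,k\,\alpha^{pk}\bigl(1+O(\alpha^{-k})\bigr)$ for an explicit constant $c_p$. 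Comparing with $F_n^q=5^{-q/2}\alpha^{nq}\bigl(1+O(\alpha^{-2n})\bigr)$ and taking logarithms yields the fundamental relation $nq\log\alpha=pk\log\alpha+\log k+O(1)$, so $nq$ and $pk$ agree up to a term of size $O(\log k)$. This already suggests splitting the analysis: for small $k$ the sum $S_k^{(p)}=\sum_{j=1}^k j F_j^p$ is a \emph{fixed-length} exponential sum in the bases $F_1,\dots,F_k$ --- for instance $S_3^{(p)}=3(2^p+1)$, whose equality with $F_n^q$ already produces both $F_4^2$ and $F_4^3$ --- while for large $k$ the dominant-term estimate controls everything.

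In the large-$k$ regime I would, for each \emph{fixed} pair $(p,q)$, extract from $F_n^q=S_k^{(p)}$ the small linear form
\[
\Bigl|nq\log\alpha-pk\log\alpha-\log k-\log c_p-\tfrac{q}{2}\log 5\Bigr|<\alpha^{-\delta k}
\]
with an absolute $\delta>0$, apply Matveev's theorem to bound $k$ (and hence $n$) explicitly, and then run a Baker--Davenport or LLL reduction to collapse that bound into the range $k\le 100$ already checked by the authors. In the small-$k$ regime the equation becomes a genuine three-parameter exponential equation in $(p,n,q)$ with only a handful of bases (e.g. $3(2^p+1)=F_n^q$), again within reach of linear forms in logarithms. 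The remaining, and decisive, task is to \emph{bound the exponents} $p$ and $q$ uniformly. For this I would deploy a two-front sieve: (i) reduce (\ref{main}) modulo many moduli $m$ and exploit the periodicity of $(F_j\bmod m)$ (the Pisano period) to kill large congruence classes of $(p,q)$; and (ii) exploit the prominence of the prime $3$ in the sporadic values $9=3^2$, $21=3\cdot 7$, $27=3^3$ by comparing the $3$-adic (and small $p$-adic) valuations of the two sides, using the known formula for $v_3(F_n)$ together with lifting-the-exponent estimates to constrain $q$ and $k$.

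The main obstacle is precisely the simultaneous unboundedness of $p$ and $q$. With three independent exponential parameters the quantity inside the absolute value above ceases to be a linear form in a \emph{fixed} number of logarithms, so Matveev's theorem no longer bounds the exponents directly; and no algebraic identity underlies the solutions, the prime-power values $3^2,3^3$ strongly indicating that they are arithmetic accidents of the smallest Fibonacci indices rather than members of an infinite family. Turning the congruence and valuation sieves into an \emph{effective, uniform} bound on $p$ and $q$ --- the one ingredient that would let the reduction-and-search phase finish the proof --- is exactly what present-day technology does not supply, and this is why the result stands only as a conjecture.
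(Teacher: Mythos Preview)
Your reading of the statement is correct: it is a conjecture, and the paper makes no attempt to prove it. The paper's supporting evidence is of a quite different character from your programme, however. Rather than sketching a Baker-type attack on the general equation, the authors (i) record a computer search over $p,q,k\le 100$, and (ii) settle completely the four special cases $p,q\in\{1,2\}$ (their Theorems~1--4) by entirely elementary means: the closed forms $\sum_{j=1}^k jF_j=kF_{k+2}-F_{k+3}+2$ and $\sum_{j=1}^k jF_j^2=F_k(kF_{k+1}-F_k)+\tau$, the explicit periodicity of $(F_u)$ and $(F_u^2)$ modulo a fixed $F_\kappa$, gcd identities such as $\gcd(F_k,F_n)=F_{\gcd(k,n)}$, and crude size comparisons via $\alpha^{k-2}\le F_k\le\alpha^{k-1}$. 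No linear forms in logarithms, Matveev bounds, or LLL reductions appear anywhere.

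Your programme is internally coherent and you correctly isolate the real obstruction --- the lack of a uniform bound on $p$ and $q$. It is worth noting, though, that for each \emph{fixed} pair $(p,q)$ the paper's elementary machinery already suffices when $p,q\le 2$, so invoking Matveev in that regime is heavier than what is demonstrably necessary; conversely, your outline says nothing case-specific about $p,q\in\{1,2\}$, which is the only part of the conjecture actually established in the paper. The decisive gap is the same on both sides: neither the authors' congruence-and-divisibility arguments nor your proposed Pisano-period and $3$-adic sieves presently yield a uniform exponent bound, and that is exactly why the statement remains a conjecture.
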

This work handles the particular cases $p,q\in\{1,2\}$ (hence the first two solutions above will be obtained), the precise results proved are described as follow. 

\begin{theorem} \label{th1} 
If 
\begin{equation}\label{eq1}
F_1+2F_2+\cdots+kF_{k}=F_{n},
\end{equation}
then
$(k,n)=(1,1),~(1,2),~(2,4),~(4,8)$, among them only the last one is non-trivial solution.
\end{theorem}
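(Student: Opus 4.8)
The plan is to find a closed form for the left-hand side $S_k := \sum_{j=1}^k jF_j$ and then compare the growth rate of $S_k$ against that of the Fibonacci numbers. First I would derive the identity $S_k = \sum_{j=1}^k jF_j = kF_{k+2} - F_{k+3} + 2$; this follows by a routine induction on $k$ (or by summation by parts using $F_1+\cdots+F_m = F_{m+2}-1$). So equation \eqref{eq1} becomes $kF_{k+2} - F_{k+3} + 2 = F_n$. Small values $k=1,2,3,4$ are checked by hand, yielding the listed solutions (and noting $k=3$ gives $9$, which is not a Fibonacci number), so from here on I assume $k \ge 5$.

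The core of the argument is a sandwiching estimate. Using the Binet formula, $F_{k+2} \approx \alpha^{k+2}/\sqrt5$ and $F_{k+3} \approx \alpha^{k+3}/\sqrt5$, so $S_k = kF_{k+2} - F_{k+3}+2$ grows like $(k-\alpha)\alpha^{k+2}/\sqrt5$, which for $k \ge 5$ is strictly between two consecutive powers of $\alpha$ — more precisely I would show $F_{n-1} < S_k < F_n$ for the unique $n$ with $\alpha^{n}/\sqrt5$ nearest $S_k$, i.e. roughly $n \approx k+2+\log_\alpha(k-\alpha)$. The clean way to do this: show that for $k\ge 5$ one has $F_{k+2+m} < S_k < F_{k+3+m}$ where $m = \lfloor \log_\alpha(k) \rfloor$ or a similar explicit bound, so that $S_k$ is trapped strictly between consecutive Fibonacci numbers and hence cannot equal $F_n$. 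Concretely, since $S_k = kF_{k+2}-F_{k+3}+2 = (k-1)F_{k+2}-F_{k+1}+2$, I would bound $S_k$ below by $(k-1)F_{k+2} - F_{k+1} < kF_{k+2}$ and above, and use that multiplication of a Fibonacci number by an integer $k$ lands strictly between $F_{k+2+r}$ and $F_{k+3+r}$ for appropriate $r$ depending on where $k$ sits relative to the Fibonacci numbers.

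The main obstacle I anticipate is making the sandwich inequalities fully rigorous and uniform in $k$, rather than merely asymptotic: one has to pin down the exact integer $n$ (as a function of $k$) for which $F_{n-1} < S_k < F_n$, and show there is no exceptional $k$ for which $S_k$ accidentally hits a Fibonacci number. The cleanest route is probably to write $F_n = \alpha^n/\sqrt5 + O(\alpha^{-n})$ with explicit constants and show that $\sqrt5\, S_k/\alpha^{k+2} = k - \alpha + o(1)$ is never within the required distance of an integer power of $\alpha$ divided by the relevant quantity — or, avoiding transcendence entirely, to exploit divisibility: for instance, analyze $S_k \bmod F_{k+1}$ or $\bmod F_{k+2}$ and show the residue is incompatible with $S_k$ being a Fibonacci number of the forced size. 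I would also double-check the boundary cases $k = 5, 6, 7$ explicitly, since small-$k$ behaviour is where the asymptotic estimates are weakest, before invoking the general inequality for $k \ge 8$.

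\begin{proof}[Proof sketch]
One first establishes by induction the closed form
\begin{equation*}
S_k := \sum_{j=1}^{k} jF_j = kF_{k+2} - F_{k+3} + 2,
\end{equation*}
so that \eqref{eq1} is equivalent to $kF_{k+2} - F_{k+3} + 2 = F_n$. Checking $1 \le k \le 4$ by direct computation gives exactly the four pairs in the statement. For $k \ge 5$ one shows, using the Binet formula and elementary inequalities for the Fibonacci sequence, that $S_k$ lies strictly between two consecutive Fibonacci numbers, hence \eqref{eq1} has no further solutions.
\end{proof}
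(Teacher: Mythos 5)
Your closed form $\sum_{j=1}^k jF_j=kF_{k+2}-F_{k+3}+2$ and the hand-check of small $k$ match the paper (Lemma 2.1.1), but the core step --- excluding all $k\ge 5$ --- is only asserted, and the reason you give for it does not suffice as stated. Writing $n=k+2+m$, Binet gives $\sqrt5\,(S_k-F_n)=\alpha^{k+2}\bigl(k-\alpha-\alpha^m\bigr)-\beta^{k+2}\bigl(k-\beta-\beta^m\bigr)+2\sqrt5$, so whether $S_k$ hits a Fibonacci number is governed not by growth rates but by how close the integer $k$ can be to $\alpha^m+\alpha$; a generic ``$S_k$ is trapped strictly between consecutive Fibonacci numbers'' claim is exactly the statement to be proved, and indeed it fails at $k=4$ (where $S_4=F_8$), so nothing purely asymptotic can rule out further accidental hits. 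To make your route rigorous you would need the quantitative separation fact $|k-\alpha-\alpha^m|\ge c>0$ uniformly in $k,m$ (it follows from $\alpha^m+\alpha=L_m+(\alpha-\beta^m)$ with $\alpha-\beta^m$ bounded away from the integers, giving $c$ about $0.14$), and then compare $c\,\alpha^{k+2}$ with the bounded right-hand error $|\beta|^{k+2}(k+2)+2\sqrt5$ to force $k$ small. You flag this as ``the main obstacle'' and offer the congruence alternative ($S_k$ modulo $F_{k+2}$), but neither is carried out, and your final sketch simply restates the conclusion; as written this is a genuine gap.

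For comparison, the paper avoids all analytic estimates: with $\kappa=k+2$ it rewrites the equation as the integrality condition $F_\kappa\mid F_n+F_{\kappa-1}-2$, determines the period of $(F_u)$ modulo $F_\kappa$ (length $2\kappa$ or $4\kappa$ according to the parity of $\kappa$, using $F_{\kappa-1}^2\equiv\pm1\pmod{F_\kappa}$), and then eliminates every residue class $F_n\equiv \pm F_j,\ \pm F_jF_{\kappa-1}$ by size arguments plus the inverse computations $2F_{\kappa-1}^{\varphi(F_\kappa)-1}\equiv F_{\kappa-3}$ and $F_{\kappa-1}^{\varphi(F_\kappa)-1}\equiv F_{\kappa-2}\pmod{F_\kappa}$ (Lemmas 4 and 5). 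That congruence route is essentially the divisibility fallback you mention; if you prefer your Binet route, it can be made to work, but only after you prove the separation estimate above, which is the idea currently missing from your argument.
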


\begin{theorem} \label{th2} 
The Diophantine equation
\begin{equation}\label{eq2}
F_1^2+2F_2^2+\cdots+kF_{k}^2=F_{n}^2
\end{equation}
possesses only the trivial solutions
$(k,n)=(1,1),~(1,2)$.
\end{theorem}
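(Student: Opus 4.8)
The plan is to start from an explicit closed form for the left-hand side of \eqref{eq2}. By partial summation (or a direct induction) using the well-known $\sum_{i=1}^{j}F_i^2=F_jF_{j+1}$, together with $\sum_{i=1}^{m}F_iF_{i+1}=F_{m+1}^2-\tfrac{1+(-1)^m}{2}$ (again a one-line induction), one obtains
\[
\sum_{j=1}^{k}jF_j^2 \;=\; kF_kF_{k+1}-F_k^2+\varepsilon_k,\qquad \varepsilon_k:=\tfrac{1-(-1)^k}{2}\in\{0,1\},
\]
so $\varepsilon_k=1$ precisely when $k$ is odd, and \eqref{eq2} is equivalent to $F_n^2=F_k(kF_{k+1}-F_k)+\varepsilon_k$.

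For $k=1$ this reads $F_n^2=1$, giving the stated trivial solutions $(k,n)=(1,1),(1,2)$, so from now on assume $k\ge2$. Two size estimates follow easily. First, $kF_{k+1}-F_k\ge 2F_{k+1}-F_k>F_k$, hence $F_n^2>F_k^2$ and thus $n>k$; write $n=k+r$ with $r\ge 1$. Second, $F_n^2<kF_kF_{k+1}<kF_{k+1}^2$, so with the standard bounds $\alpha^{m-2}\le F_m\le\alpha^{m-1}$ one gets $r=n-k<2+\tfrac{\ln k}{2\ln\alpha}$, and therefore $F_r<\alpha\sqrt{k}$. In particular $F_r^2+1<F_k$ for all $k$ exceeding an explicit small bound $k_0$, since $F_k\ge\alpha^{k-2}$ grows exponentially while $F_r^2+1<\alpha^2 k+1$.

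The heart of the argument is reduction modulo $F_k$. From the addition formula $F_{k+r}=F_{k-1}F_r+F_kF_{r+1}$ we get $F_n\equiv F_{k-1}F_r\pmod{F_k}$, while Cassini's identity $F_{k-1}F_{k+1}-F_k^2=(-1)^k$ combined with $F_{k+1}\equiv F_{k-1}\pmod{F_k}$ gives $F_{k-1}^2\equiv(-1)^k\pmod{F_k}$; hence $F_n^2\equiv(-1)^kF_r^2\pmod{F_k}$. On the other hand the closed form gives $F_n^2\equiv\varepsilon_k\pmod{F_k}$, so $(-1)^kF_r^2\equiv\varepsilon_k\pmod{F_k}$. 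If $k$ is even this forces $F_k\mid F_r^2$, impossible because $0<F_r^2<F_k$ (as $r\ge1$ and $k>k_0$); if $k$ is odd it forces $F_k\mid F_r^2+1$, impossible because $0<F_r^2+1<F_k$. Thus there is no solution with $k>k_0$, and the finitely many remaining cases $2\le k\le k_0$ are eliminated by checking directly that $kF_kF_{k+1}-F_k^2+\varepsilon_k$ is not the square of a Fibonacci number.

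I expect the only delicate point to be calibrating the estimate for $r=n-k$ sharply enough that the modular step applies for all $k$ outside a genuinely short, hand-checkable range; the congruence computation itself and the final case check are then routine.
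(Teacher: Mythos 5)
Your proposal is correct, and it takes a genuinely different (and leaner) route than the paper. The closed form you use, $\sum_{j=1}^k jF_j^2=kF_kF_{k+1}-F_k^2+\varepsilon_k$, is exactly the paper's Lemma \ref{l1}.2, and your size estimates ($n>k$ and $n-k<2+\tfrac{1}{2}\log_\alpha k$, hence $F_{n-k}<\alpha\sqrt{k}$) are sound consequences of Corollary \ref{co}. The heart of your argument — reducing modulo $F_k$ via $F_{k+r}\equiv F_{k-1}F_r$ and $F_{k-1}^2\equiv(-1)^k\pmod{F_k}$ (Cassini), so that the equation forces $F_k\mid F_r^2$ for even $k$ and $F_k\mid F_r^2+1$ for odd $k$, both impossible once $F_r^2+1<F_k$ — is a clean gap-principle-plus-congruence argument that collapses the problem immediately outside a short hand-checkable range (roughly $k\le 8$, and those values visibly give $3,15,51,176,560,1743,5271$, none a Fibonacci square). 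The paper instead keeps the equation in the form $F_k(kF_{k+1}-F_k)=F_n^2$ (even $k$) or $=F_{n-\ve}F_{n+\ve}$ via Lemma \ref{ppp} (odd $k$), and runs a case analysis on $\nu=\gcd(k,n)$ (respectively $\nu_1,\nu_2$), using $\gcd(F_a,F_b)=F_{\gcd(a,b)}$ and $\gcd(F_a,L_b)$ facts to derive $F_k\mid kF_\nu^2$ or $F_k\mid kF_{\nu_1}F_{\nu_2}$ and then bound $k<19$; this needs a larger initial verification ($k\le 20$) and considerably more casework. Your approach buys brevity and a smaller finite check, at the cost of needing the explicit asymptotic calibration of $r=n-k$, which you have handled correctly; the paper's gcd method is more in the spirit of its other proofs and reuses its lemma apparatus. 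The only points worth writing out carefully in a final version are the auxiliary identity $\sum_{i=1}^m F_iF_{i+1}=F_{m+1}^2-\tfrac{1+(-1)^m}{2}$ (a one-line induction, as you say) and the explicit value of $k_0$ with the corresponding finite verification.
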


\begin{theorem} \label{th3} 
If 
\begin{equation}\label{eq3}
F_1+2F_2+\cdots+kF_{k}=F_{n}^2,
\end{equation}
then
$(k,n)=(1,1),~(1,2),~(3,4)$, among them only the last one is non-trivial solution.
\end{theorem}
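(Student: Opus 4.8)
The plan is to begin with the closed form
\begin{equation*}
S_k:=\sum_{j=1}^{k}jF_j=kF_{k+2}-F_{k+3}+2 ,
\end{equation*}
which follows by a one‑line induction (or by Abel summation from $F_1+\cdots+F_k=F_{k+2}-1$), so that \eqref{eq3} becomes $kF_{k+2}-F_{k+3}+2=F_n^2$. One first settles the range $k\le k_0$, for a suitable explicit $k_0$, by direct computation; this yields exactly $(k,n)=(1,1),(1,2),(3,4)$. For $k>k_0$ the strategy is to derive a contradiction inside the ring $\mathbb{Z}[\alpha]=\mathbb{Z}+\mathbb{Z}\alpha$, whose conjugation over $\mathbb{Z}$ sends $\alpha\mapsto\beta$ (equivalently $\sqrt5\mapsto-\sqrt5$); denote it by $\bar{\,\cdot\,}$ and let $N(z)=z\bar z$, so that $N(u+v\alpha)=u^2+uv-v^2$, while $N(\alpha)=\alpha\beta=-1$ and $\alpha^m=F_{m-1}+F_m\alpha$.

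Set $\xi_k:=\sqrt5\,\alpha^{k+2}(k-\alpha)$; since $\sqrt5=2\alpha-1\in\mathbb{Z}[\alpha]$ one computes $\xi_k=Q+P\alpha$ with
\begin{equation*}
P=(2k-1)F_{k+3}-(k+2)F_{k+2},\qquad Q=(2k-1)F_{k+2}-(k+2)F_{k+1},
\end{equation*}
both positive, of size comparable to $k\alpha^{k}$, and with $P/Q\to\alpha$ as $k\to\infty$. Combining $5F_n^2=\alpha^{2n}+\beta^{2n}-2(-1)^n$ (immediate from Binet) with the identity $5S_k=\xi_k+\bar{\xi}_k+10$, equation \eqref{eq3} rearranges to
\begin{equation*}
\alpha^{2n}-\xi_k=\bar{\xi}_k-\beta^{2n}+2(-1)^n+10 ,
\end{equation*}
whose right‑hand side is bounded by an absolute constant once $k$ is large (as $|\beta|<1$ forces $|\bar{\xi}_k|\to0$). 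Since $\alpha^{2n}-\xi_k=(F_{2n-1}-Q)+(F_{2n}-P)\alpha$ lies in $\mathbb{Z}[\alpha]$, this bound together with the conjugate bound $|\beta^{2n}-\bar{\xi}_k|<1$ (again valid for $k$ large) confines the integers $A:=F_{2n}-P$ and $B:=F_{2n-1}-Q$ to an explicit finite set of pairs.

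One then compares norms. On the one hand $N(\alpha^{2n})=1$ reads $(Q+B)^2+(Q+B)(P+A)-(P+A)^2=1$; on the other hand
\begin{equation*}
N(\xi_k)=N(\sqrt5)\,N(\alpha)^{k+2}\,N(k-\alpha)=(-5)(-1)^{k}(k^2-k-1),
\end{equation*}
that is $Q^2+QP-P^2=-5(-1)^{k}(k^2-k-1)$. Expanding the first of these and eliminating $Q^2+QP-P^2$ yields
\begin{equation*}
1=-5(-1)^{k}(k^2-k-1)+(2B+A)Q+(B-2A)P+(B^2+AB-A^2).
\end{equation*}
If the integer pair $(2B+A,\,B-2A)$ is nonzero, then since $P/Q\to\alpha$ and $(2B+A)+(B-2A)\alpha\neq0$ the term $(2B+A)Q+(B-2A)P$ has absolute value at least $c_{A,B}\,k\alpha^{k}$ for some $c_{A,B}>0$, so for $k$ large it exceeds $5(k^2-k-1)+|B^2+AB-A^2|+1$ --- a contradiction. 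Hence $2B+A=B-2A=0$, which forces $A=B=0$; the identity then collapses to $5(-1)^{k}(k^2-k-1)=-1$, impossible for every $k\ge2$. This eliminates all $k>k_0$, and with the preliminary computation the theorem follows.

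The main obstacle is making ``$k$ large'' effective: one must take $\delta:=\min c_{A,B}$ over the finitely many admissible nonzero pairs $(2B+A,\,B-2A)$, combine it with an explicit lower bound $Q\ge c\,k\alpha^{k}$ and explicit control of $|P/Q-\alpha|$, and thereby pin down $k_0$, after which the residual range is a routine (if mildly tedious) check. As a sanity test, the sporadic solution $S_3=9=F_4^2$ does surface in this analysis, as the pair $(A,B)=(6,3)$ (here $P=15$, $Q=10$, $F_8=21$, $F_7=13$, and indeed $25+120-135-9=1$); it escapes the contradiction precisely because for so small a $k$ the cross term $(2B+A)Q+(B-2A)P$ need not dominate $5(k^2-k-1)$.
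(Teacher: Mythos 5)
Your identities all check out: $5S_k=\xi_k+\bar{\xi}_k+10$ with $\xi_k=\sqrt5\,\alpha^{k+2}(k-\alpha)=Q+P\alpha$, the values of $P,Q$, the norm $N(\xi_k)=-5(-1)^k(k^2-k-1)$, and the expansion that leads to $1=-5(-1)^k(k^2-k-1)+(2B+A)Q+(B-2A)P+(B^2+AB-A^2)$ are all correct (your $k=3$ sanity check confirms this), and the finiteness of the admissible pairs $(A,B)$ does follow from the two archimedean bounds on $\alpha^{2n}-\xi_k$ and its conjugate. So the argument is sound, but it is a genuinely different route from the paper. The paper never leaves $\mathbb{Z}$: using Lemma \ref{l1}.1 it rewrites \eqref{eq3} as the divisibility condition $F_\kappa\mid F_n^2+F_{\kappa-1}-2$ with $\kappa=k+2$, observes that $(F_u^2)$ is periodic modulo $F_\kappa$ with period $2\kappa$ (via Lemma \ref{l1}.10), and then rules out every residue class $F_j^2$ by size estimates combined with the factorization identities for $F_m\pm1$ and $F_m^2\pm1$ (Lemmas \ref{lfp} and \ref{ppp}); growth estimates enter only marginally. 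Your approach instead turns the equation into a near-identity $\alpha^{2n}\approx\xi_k$ in $\mathbb{Z}[\alpha]$ and plays the norm form against polynomial-versus-exponential growth; this is more conceptual and more readily transplanted to other weights or exponents, and it stays effective without Baker-type input since $|u+v\alpha|\ge|u+v\beta|^{-1}$ gives an elementary lower bound over the finitely many nonzero pairs $(u,v)=(2B+A,B-2A)$. The price is that your proof is not yet complete as written: to claim the theorem you must actually compute the explicit constants (bounds on $|A|,|B|$, the minimum $c_{A,B}$, a lower bound $Q\ge ck\alpha^{k}$, control of $|P/Q-\alpha|$), extract a concrete $k_0$, and perform the finite verification up to $k_0$; this is routine, as you say, but it is the step that turns the sketch into a proof, whereas the paper's modular argument needs only the hand check of a few small $\kappa$.
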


\begin{theorem} \label{th4} 
The Diophantine equation
\begin{equation}\label{eq4}
F_1^2+2F_2^2+\cdots+kF_{k}^2=F_{n}
\end{equation}
possesses only the trivial solutions
$(k,n)=(1,1),~(1,2),~(2,4)$.
\end{theorem}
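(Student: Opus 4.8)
The plan is to combine an explicit formula for $\mathcal{S}_k:=\sum_{j=1}^{k}jF_j^{2}$ with a divisibility constraint coming from that formula, and to close the argument by crude growth estimates. First I would evaluate $\mathcal{S}_k$ by Abel summation (summation by parts): using the known partial sums $\sum_{j=1}^{i}F_j^{2}=F_iF_{i+1}$, the computation reduces to $\sum_{i=1}^{k-1}F_iF_{i+1}$, and the classical identity $\sum_{i=1}^{m}F_iF_{i+1}=F_{m+1}^{2}-\frac{1+(-1)^{m}}{2}$ then gives
\[
\mathcal{S}_k=kF_kF_{k+1}-F_k^{2}+\varepsilon_k,\qquad \varepsilon_k:=\frac{1-(-1)^{k}}{2}\in\{0,1\},
\]
so $\varepsilon_k=0$ for even $k$ and $\varepsilon_k=1$ for odd $k$; a check against $\mathcal{S}_1=1$, $\mathcal{S}_2=3$, $\mathcal{S}_3=15$ confirms the formula.

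The key observation is a divisibility property. Let $m\in\{k,k+1\}$ be whichever of the two consecutive integers is even. Then $F_m\mid\mathcal{S}_k$: for even $k$ this is immediate since $m=k$ and $\varepsilon_k=0$, while for odd $k$ (so $m=k+1$, $\varepsilon_k=1$) one reduces modulo $F_{k+1}$ and uses Cassini's identity $F_{k-1}F_{k+1}-F_k^{2}=(-1)^{k}=-1$ to get $F_k^{2}\equiv 1$, whence $\mathcal{S}_k\equiv 0-1+1=0\pmod{F_{k+1}}$. Consequently, if $\mathcal{S}_k=F_n$ then $F_m\mid F_n$, and — since $m\ge 3$ as soon as $k\ge 3$ — this forces $m\mid n$.

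To finish, I would use growth estimates. Because $\mathcal{S}_k$ grows like $k\alpha^{2k}$ while $F_n$ grows like $\alpha^{n}$, the index $n$ is roughly $2k+\log_\alpha k$; it is not a bounded function of $k$, so one cannot simply trap $\mathcal{S}_k$ between two consecutive Fibonacci numbers with a fixed offset — and this is exactly where the divisibility property rescues the argument. Comparing $F_{2m}=F_mL_m$ and $F_{3m}=F_m\bigl(5F_m^{2}+3(-1)^{m}\bigr)$ with the elementary bounds $kF_kF_{k+1}-F_k^{2}\le \mathcal{S}_k\le kF_kF_{k+1}$ should yield
\[
F_{2m}<\mathcal{S}_k<F_{3m}\qquad (k\ge 4).
\]
Then $2m<n<3m$, an interval containing no multiple of $m$, contradicting $m\mid n$; hence \eqref{eq4} has no solution with $k\ge 4$. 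The remaining cases $k\le 3$ are settled by hand: $\mathcal{S}_1=1=F_1=F_2$, $\mathcal{S}_2=3=F_4$, and $\mathcal{S}_3=15$ is not a Fibonacci number. This leaves precisely $(k,n)=(1,1),(1,2),(2,4)$.

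The routine ingredients here are the partial summation and the Cassini reduction. The one care-demanding point is calibrating the two inequalities in the last display so that the threshold is genuinely $k\ge 4$, together with the observation that the borderline equality $\mathcal{S}_2=F_{2m}$ (with $m=2$, $n=2m=4$) is \emph{not} excluded by the divisibility argument — it is exactly the solution $(2,4)$ — while the equivalence $F_m\mid F_n\Leftrightarrow m\mid n$ is available only for $m\ge 3$, which is precisely why $k\le 2$ must be treated separately.
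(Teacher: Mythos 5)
Your proposal is correct, and it takes a genuinely different (and shorter) route than the paper. Both arguments start from the same closed form $\sum_{j=1}^k jF_j^2=kF_kF_{k+1}-F_k^2+\varepsilon_k$ (the paper's Lemma \ref{l1}.2), but from there the paper proceeds by squeezing $n$ into the range $2k+4\le n<\tfrac52 k$ via Corollary \ref{co} and then splitting on the parity of $k$: for even $k$ it expands $F_n=F_{k+1}F_{n-k}+F_kF_{n-k-1}$ and uses periodicity of the Fibonacci sequence modulo $F_{k+1}$ to force $F_{k+1}\mid F_j+1$, while for odd $k$ it factors $F_n-1$ as $F_{(n-\ve)/2}L_{(n+\ve)/2}$ (Lemma \ref{lfp}) and runs a gcd estimate with Lucas numbers; this requires hand-checking up to $k=12$. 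You instead extract the clean divisibility $F_m\mid \mathcal{S}_k$ for the even member $m$ of $\{k,k+1\}$ (immediate for even $k$, and via Cassini $F_k^2\equiv 1\pmod{F_{k+1}}$ for odd $k$), so that a putative solution forces $m\mid n$ by Lemma \ref{l1}.6, and then trap $\mathcal{S}_k$ strictly between $F_{2m}=F_mL_m$ and $F_{3m}=F_m\bigl(5F_m^2+3(-1)^m\bigr)$. The calibration you flag does hold from $k\ge4$ on: for even $k$ one needs $L_k<kF_{k+1}-F_k$, which follows from $L_k<2F_{k+1}\le (k-1)F_{k+1}$, and for odd $k\ge5$ one needs $F_{k+1}L_{k+1}<(k-1)F_kF_{k+1}$, which follows from $L_{k+1}=2F_k+F_{k+1}<4F_k$; the upper bounds $kF_{k+1}<5F_k^2+3$ and $kF_k<5F_{k+1}^2+3$ are trivial. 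Since no multiple of $m$ lies strictly between $2m$ and $3m$, only $k\le3$ remains, settled by inspection. Your treatment of the borderline cases is also right: for $k\le2$ the even index is $m=2$, where $F_m=1$ gives no information, and $(2,4)$ survives exactly as it should. The payoff of your route is a substantially more elementary proof with far fewer initial cases; the paper's modular-periodicity machinery, by contrast, is the template it reuses for Theorems \ref{th1} and \ref{th3}, where your divisibility shortcut is not available.
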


\section{Lemmata}
In this section, we present the lemmata that are needed in the proofs of the theorems. The first lemma is a collection of a few well-known results, we state them without proof, and in the proof of the theorems sometimes we do not refer to them.
\begin{lemma}\label{l1}  Let $k$ and $n$ be arbitrary integers.
	\begin{enumerate}
		\item $\sum_{j=1}^kjF_j=kF_{k+2}-F_{k+3}+2$.
		\item $\sum_{j=1}^kjF_j^2=F_k(kF_{k+1}-F_k)+\tau$, where $\tau=0$ if $k$ is even, and $\tau=1$ otherwise.
		\item For $k\ge0$ we have $F_{-k}=(-1)^{k+1}F_k$, further $L_{-k}=(-1)^kL_k$ (extension of the sequences for negative subscripts).
		\item $\gcd(F_k,F_n)=F_{\gcd(k,n)}$.
		\item $\gcd(F_k,L_n)=1$ or $2$ or $L_{\gcd(k,n)}$.
		\item $F_k\mid F_n$ if and only if $k\mid n$.
		\item $F_{k+1}F_n-F_kF_{n+1}=(-1)^{n+1}F_{k-n}$ (d'~Ocagne's identity).
		\item $F_{k+n}=F_{k}F_{n+1}+F_{k-1}F_n$.
		\item $F_{2k}=F_kL_k$.
		\item $F_{k+n}^2-F_{k-n}^2=F_{2k}L_{2n}$. 
	\end{enumerate}
\end{lemma}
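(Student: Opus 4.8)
The plan is to sort the ten assertions into three groups according to the technique that settles them, since they are classical identities. The tool of first resort is the Binet representation $F_m=(\alpha^m-\beta^m)/(\alpha-\beta)$ and $L_m=\alpha^m+\beta^m$, together with the relations $\alpha+\beta=1$, $\alpha\beta=-1$, and $\alpha-\beta=\sqrt5$.

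\textbf{Group 1 (closed-form algebraic identities).} Items 3, 7, 8, 9, 10 all reduce to a routine expansion in $\alpha$ and $\beta$. For item 9 I would write $F_kL_k=\frac{(\alpha^k-\beta^k)(\alpha^k+\beta^k)}{\alpha-\beta}=\frac{\alpha^{2k}-\beta^{2k}}{\alpha-\beta}=F_{2k}$, and settle item 10 by expanding $F_{k+n}^2-F_{k-n}^2$ and collecting the cross terms via $\alpha\beta=-1$. For item 3 the key observation is that $\alpha\beta=-1$ gives $\alpha^{-1}=-\beta$ and $\beta^{-1}=-\alpha$, whence $\alpha^{-k}=(-1)^k\beta^k$ and $\beta^{-k}=(-1)^k\alpha^k$; substituting into the Binet formulae yields both the Fibonacci and the Lucas statements at once. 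Items 7 (d'Ocagne) and 8 (the addition formula) are likewise verified by substituting Binet and checking that the two sides agree as Laurent polynomials in $\alpha,\beta$; alternatively item 8 follows by a short induction on $n$ from the recurrence, and item 7 is then a two-line consequence of item 8.

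\textbf{Group 2 (weighted sums).} Items 1 and 2 I would prove by induction on $k$. For item 1 the base case $k=1$ reads $F_1=F_3-F_4+2=2-3+2=1$, and in the inductive step one adds $(k+1)F_{k+1}$ to the right-hand side and simplifies using $F_{k+3}=F_{k+2}+F_{k+1}$ and $F_{k+4}=F_{k+3}+F_{k+2}$. Item 2 is the more delicate of the two because of the parity term $\tau$: the induction hypothesis contributes $F_k(kF_{k+1}-F_k)+\tau_k$, to which I add $(k+1)F_{k+1}^2$, and the passage to $F_{k+1}\bigl((k+1)F_{k+2}-F_{k+1}\bigr)+\tau_{k+1}$ hinges on the Cassini-type identity $F_{k+1}^2-F_kF_{k+2}=(-1)^k$, which is exactly what flips $\tau$ between $0$ and $1$ as $k$ increases by one.

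\textbf{Group 3 (divisibility and gcd).} Items 4, 5, 6 are the substantive part, and I expect item 5 to be the main obstacle. The backbone is the index-reduction identity $\gcd(F_m,F_n)=\gcd(F_m,F_{n-m})$, which I would derive from the addition formula (item 8) written as $F_n=F_{n-m}F_{m+1}+F_{n-m-1}F_m$ together with the coprimality of consecutive Fibonacci numbers, $\gcd(F_m,F_{m-1})=1$ (immediate from Cassini's identity $F_{m-1}F_{m+1}-F_m^2=(-1)^m$). Iterating this index reduction mirrors the Euclidean algorithm on the subscripts and collapses $\gcd(F_k,F_n)$ to $F_{\gcd(k,n)}$, giving item 4; item 6 is then the special case in which the smaller index divides the larger, so that the Euclidean algorithm terminates at $F_{\gcd(k,n)}=F_k$. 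For item 5 I would pass from Lucas to Fibonacci via item 9, $F_{2n}=F_nL_n$, so that $\gcd(F_k,L_n)$ can be controlled through $\gcd(F_k,F_{2n})$ and $\gcd(F_k,F_n)$ using item 4; the three possible values $1$, $2$, $L_{\gcd(k,n)}$ then emerge from a $2$-adic and parity case analysis of how the index $\gcd(k,2n)$ compares with $\gcd(k,n)$. The hardest bookkeeping will be keeping track of the factor $2$ in this last step, since the relation $L_n^2-5F_n^2=4(-1)^n$ forces the even cases to be handled separately.
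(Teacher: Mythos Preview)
Your proposal is sound, and in fact it goes well beyond what the paper does: the paper explicitly says that Lemma~\ref{l1} ``is a collection of a few well-known results, we state them without proof''. So there is no paper-side argument to compare against; anything you write here is additional content.

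On the substance: your inductions for items~1 and~2 check out (the Cassini step $F_{k+1}^2-F_kF_{k+2}=(-1)^k$ is exactly what toggles $\tau$), and the Binet verifications for items~3, 7--10 are routine as you say. The one place your sketch is thin is item~5. Controlling $\gcd(F_k,L_n)$ through $\gcd(F_k,F_{2n})=F_{\gcd(k,2n)}$ and $\gcd(F_k,F_n)=F_{\gcd(k,n)}$ is the right idea, but the trichotomy $1$, $2$, $L_{\gcd(k,n)}$ does not fall out automatically from a ``$2$-adic and parity case analysis''; one really needs the companion facts $\gcd(F_m,L_m)\in\{1,2\}$ and, with $d=\gcd(k,n)$, the splitting $F_{2d}=F_dL_d$ together with a case distinction on whether $k/d$ is even or odd. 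If you want the argument to be self-contained rather than a pointer to the literature, spell out those two ingredients; otherwise what you have is a plausible outline but not yet a proof of item~5.
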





\begin{lemma}\label{lfp}
\begin{equation*}\label{elagf-1}
F_k-1 =
\begin{cases} F_{(k+2)/2}L_{(k-2)/2},\quad {\rm if} &\mbox{$k\equiv0\pmod{4}$}  \\ 
F_{(k-1)/2}L_{(k+1)/2},\quad{\rm if} &\mbox{$k\equiv1\pmod{4}$} \\ 
F_{(k-2)/2}L_{(k+2)/2},\quad{\rm if} &\mbox{$k\equiv2\pmod{4}$} \\ 
F_{(k+1)/2}L_{(k-1)/2},\quad{\rm if} &\mbox{$k\equiv3\pmod{4}$} 
\end{cases};
\end{equation*}
\begin{equation*}\label{elagf+1}
F_k+1 =
\begin{cases} F_{(k-2)/2}L_{(k+2)/2},\quad {\rm if} &\mbox{$k\equiv0\pmod{4}$}  \\ 
F_{(k+1)/2}L_{(k-1)/2},\quad{\rm if} &\mbox{$k\equiv1\pmod{4}$} \\ 
F_{(k+2)/2}L_{(k-2)/2},\quad{\rm if} &\mbox{$k\equiv2\pmod{4}$} \\ 
F_{(k-1)/2}L_{(k+1)/2},\quad{\rm if} &\mbox{$k\equiv3\pmod{4}$} 
\end{cases}.
\end{equation*}
\end{lemma}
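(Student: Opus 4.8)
The statement is a purely arithmetic identity: for every $k$ it expresses $F_k\mp 1$ as a product of one Fibonacci and one Lucas number, with the exact indices depending on $k\bmod 4$. The natural approach is to reduce everything to the closed forms $F_k=(\alpha^k-\beta^k)/(\alpha-\beta)$ and $L_k=\alpha^k+\beta^k$, since $\alpha\beta=-1$ makes all the relevant products collapse cleanly. Concretely, I would prove the master identity
\[
F_{a}L_{b}=F_{a+b}+(-1)^{b}F_{a-b}\qquad(\text{equivalently }F_{a}L_{b}=F_{a+b}-(-1)^{a-b}F_{a-b}),
\]
valid for all integers $a,b$ once the sequences are extended to negative indices via Lemma \ref{l1}(3). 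This follows in one line from the Binet formulas: $(\alpha^a-\beta^a)(\alpha^b+\beta^b)=(\alpha^{a+b}-\beta^{a+b})+(\alpha^a\beta^b-\alpha^b\beta^a)=(\alpha^{a+b}-\beta^{a+b})-(\alpha\beta)^{b}(\alpha^{a-b}-\beta^{a-b})$, and dividing by $\alpha-\beta$ gives the claim. (Alternatively one can quote Lemma \ref{l1}(8) together with $L_b=F_{b+1}+F_{b-1}$ to get the same formula without invoking Binet.)

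With the master identity in hand, each of the eight cases is a direct substitution. For instance, if $k\equiv 0\pmod 4$ take $a=(k+2)/2$, $b=(k-2)/2$: then $a+b=k$, $a-b=2$, and $(-1)^{b}=(-1)^{(k-2)/2}=+1$ because $(k-2)/2$ is even when $k\equiv 0\pmod 4$; hence $F_{(k+2)/2}L_{(k-2)/2}=F_k+(-1)^{b}F_2=F_k+1$. Wait — the claimed value for $k\equiv0\pmod4$ is $F_k-1$, so instead one uses $a=(k-2)/2$, $b=(k+2)/2$, giving $a+b=k$, $a-b=-2$, $F_{a-b}=F_{-2}=-1$, and $(-1)^b=(-1)^{(k+2)/2}=+1$, so the product is $F_k+(+1)(-1)=F_k-1$. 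In general, in each case $a+b=k$ and $a-b=\pm 2$, so $F_{a-b}=\pm 1$, and the sign $(-1)^{b}$ is determined by the residue of $k$ modulo $4$; checking the four residues for the $-1$ table and the four for the $+1$ table amounts to a short parity bookkeeping, which I would present compactly in a table rather than spelling out all eight lines.

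There is essentially no hard step here; the only thing to be careful about is the sign conventions for negative indices (where $F_{-2}=-1$, $F_{-1}=1$, $F_{0}=0$, $F_{1}=1$ and $L_{-1}=-1$, $L_{-2}=3$), and making sure that in each of the eight cases the half-integer indices $(k\pm 1)/2$, $(k\pm 2)/2$ are genuine integers for the stated residue class — which they are, by inspection. So the write-up is: (i) state and prove the one-line master identity $F_aL_b=F_{a+b}+(-1)^bF_{a-b}$; (ii) observe that in all eight cases $a+b=k$ and $a-b\in\{2,-2\}$; (iii) tabulate $(-1)^b$ and $F_{a-b}$ against $k\bmod 4$ to read off $F_k\mp 1$. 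I expect step (iii) — the parity bookkeeping across eight cases — to be the only place an error could creep in, so I would double-check it against the small values $k=2,3,4,5,6,7$ before finalizing.
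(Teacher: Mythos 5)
Your route is necessarily different from the paper's, because the paper does not prove Lemma \ref{lfp} at all: it simply cites \cite{LSz} and \cite{PP}. Giving a self-contained proof via the product identity $F_aL_b=F_{a+b}+(-1)^bF_{a-b}$ (one line from Binet, with negative indices handled by Lemma \ref{l1}(3)) is a perfectly sound and arguably preferable approach: in every one of the eight cases one indeed has $a+b=k$ and $a-b=\pm2$, so $F_{a-b}=\pm1$ and the whole lemma reduces to parity bookkeeping, exactly as you say.

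But the one case you actually work out is carried out incorrectly, and the error is precisely in the bookkeeping you flagged as delicate. If $k\equiv0\pmod4$, write $k=4m$; then $(k-2)/2=2m-1$ and $(k+2)/2=2m+1$ are both \emph{odd}, not even as you claim twice. Hence your first choice $a=(k+2)/2$, $b=(k-2)/2$ gives $(-1)^b=-1$ and $F_{(k+2)/2}L_{(k-2)/2}=F_k-F_2=F_k-1$, which is already the lemma's first line — no switch was needed. Your ``corrected'' choice $a=(k-2)/2$, $b=(k+2)/2$ gives $(-1)^b=-1$ and $F_{a-b}=F_{-2}=-1$, so the product is $F_k+1$, not $F_k-1$ as you conclude; as written, your conclusion contradicts the lemma's own table, which asserts $F_{(k-2)/2}L_{(k+2)/2}=F_k+1$ for $k\equiv0\pmod4$. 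A numerical sanity check ($k=8$: $F_5L_3=20=F_8-1$, $F_3L_5=22=F_8+1$) confirms this. So the plan is fine and the master identity is correct, but the parity table in step (iii) must be redone carefully for all four residues; the check against small $k$ that you propose would have caught this.
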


\begin{proof}
See, for instance, \cite{LSz} and \cite{PP}.
\end{proof}

\begin{lemma}\label{ppp}
\begin{equation*}\label{p-1}
F_k^2-1 =
\begin{cases} F_{k-1}F_{k+1},\quad {\rm if} &\mbox{$k\equiv1\pmod{2}$}  \\ 
F_{k-2}F_{k+2},\quad{\rm if} &\mbox{$k\equiv0\pmod{2}$} 
\end{cases};
\end{equation*}
\begin{equation*}\label{p+1}
F_k^2+1 =
\begin{cases} F_{k-1}F_{k+1},\quad {\rm if} &\mbox{$k\equiv0\pmod{2}$}  \\ 
F_{k-2}F_{k+2},\quad{\rm if} &\mbox{$k\equiv1\pmod{2}$} 
\end{cases}.
\end{equation*}
\end{lemma}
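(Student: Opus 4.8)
The plan is to reduce all four displayed formulas to Cassini's identity plus one extra application of the Fibonacci recurrence. First I would record Cassini's identity in the form $F_{k-1}F_{k+1}-F_k^2=(-1)^k$, which is an immediate specialization of d'Ocagne's identity (Lemma~\ref{l1}, item~7): putting the pair of indices equal to $(k,k-1)$ there gives $F_{k+1}F_{k-1}-F_kF_k=(-1)^{k}F_1=(-1)^k$. Reading this off according to the parity of $k$ already yields the two ``$F_{k-1}F_{k+1}$'' lines of the lemma, namely $F_k^2-1=F_{k-1}F_{k+1}$ when $k$ is odd and $F_k^2+1=F_{k-1}F_{k+1}$ when $k$ is even.

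For the ``$F_{k-2}F_{k+2}$'' lines I would compute this product directly. Writing $F_{k-2}=F_k-F_{k-1}$ and $F_{k+2}=F_{k+1}+F_k$, expanding the product, substituting $F_{k+1}F_{k-1}=F_k^2+(-1)^k$ from the previous step, and then using $F_{k+1}-F_{k-1}=F_k$, everything collapses to $F_{k-2}F_{k+2}=F_k^2-(-1)^k$. Splitting again on the parity of $k$ gives $F_k^2-1=F_{k-2}F_{k+2}$ for $k$ even and $F_k^2+1=F_{k-2}F_{k+2}$ for $k$ odd, which completes the four cases. (Alternatively one can get the same relation from Catalan's identity with $r=2$, or from Lemma~\ref{l1}, item~10, but the recurrence-based computation is the most self-contained.)

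There is no genuine obstacle here; the statement is a pair of classical identities and the proof is a short calculation. The only points that deserve a line of care are the sign bookkeeping in the two parity splits, and --- if one insists on the lemma for all integers $k$ as stated --- making sure the derivation remains valid for small or non-positive indices. That is immediate from the extension $F_{-k}=(-1)^{k+1}F_k$ of Lemma~\ref{l1}(3); or, more cheaply, one notes that for fixed parity of $k$ both sides of each claimed identity satisfy the same linear recurrence in $k$, so checking two consecutive values of $k$ settles it.
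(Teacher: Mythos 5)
Your proof is correct, but it differs from the paper in that the paper does not prove Lemma~\ref{ppp} at all: it simply cites Lemma~3 of Pongsriiam \cite{P}. Your argument is a self-contained replacement. The reduction of the $F_{k-1}F_{k+1}$ cases to Cassini's identity via the specialization $n=k-1$ of d'Ocagne's identity (Lemma~\ref{l1}.7), giving $F_{k+1}F_{k-1}-F_k^2=(-1)^kF_1=(-1)^k$, is accurate, and your expansion $F_{k-2}F_{k+2}=(F_k-F_{k-1})(F_{k+1}+F_k)=F_k(F_{k+1}-F_{k-1})-(-1)^k=F_k^2-(-1)^k$ (i.e.\ Catalan's identity with $r=2$) is a clean two-line derivation of the remaining cases; both parity splits are handled with the right signs. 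What your route buys is that the lemma becomes a consequence of identities already listed in Lemma~\ref{l1}, removing an external dependency; what the paper's citation buys is brevity and a pointer to a source where these product decompositions (and the related ones of Lemma~\ref{lfp}) are treated systematically. Your closing remark about validity for all integer indices, either via $F_{-k}=(-1)^{k+1}F_k$ or by a recurrence-plus-two-initial-values argument at fixed parity, is a sensible extra precaution, though in the paper the lemma is only ever applied with $k$ well inside the positive range.
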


\begin{proof}
See Lemma 3 in \cite{P}.
\end{proof}

\begin{lemma}\label{l2}
	If $j\ge4$ is even, then
	$$
	2F_{j-1}^{\varphi(F_j)-1}\equiv F_{j-3}\pmod{F_j}.
	$$
\end{lemma}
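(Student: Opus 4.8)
The plan is to work modulo $F_j$ and reduce the claimed congruence $2F_{j-1}^{\varphi(F_j)-1}\equiv F_{j-3}\pmod{F_j}$ to a statement about the inverse of $F_{j-1}$ modulo $F_j$. Since $j\ge 4$ is even, $\gcd(F_{j-1},F_j)=F_{\gcd(j-1,j)}=F_1=1$ by Lemma~\ref{l1}(4), so $F_{j-1}$ is invertible modulo $F_j$, and Euler's theorem gives $F_{j-1}^{\varphi(F_j)}\equiv 1\pmod{F_j}$. Hence $F_{j-1}^{\varphi(F_j)-1}\equiv F_{j-1}^{-1}\pmod{F_j}$, and the lemma is equivalent to
\[
2\equiv F_{j-1}F_{j-3}\pmod{F_j}.
\]

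So it suffices to verify $F_{j-1}F_{j-3}\equiv 2\pmod{F_j}$. First I would use the recurrence to write $F_{j-1}=F_j-F_{j-2}$ and $F_{j-3}=F_{j-1}-F_{j-2}=F_j-2F_{j-2}$, so that $F_{j-1}F_{j-3}\equiv F_{j-2}\cdot 2F_{j-2}=2F_{j-2}^2\pmod{F_j}$; thus it is enough to show $F_{j-2}^2\equiv 1\pmod{F_j}$. This last congruence follows from Lemma~\ref{ppp}: since $j$ is even, $j-2$ is even, and the formula $F_{k}^2+1=F_{k-1}F_{k+1}$ for even $k$ (applied with $k=j-2$) gives $F_{j-2}^2+1=F_{j-3}F_{j-1}$, while $F_{j-2}^2-1 = F_{j-4}F_{j}$ for even... wait — more directly, $F_{j-2}^2 = F_{j-3}F_{j-1}-1$, and I need this to be $\equiv 1$, i.e. $F_{j-3}F_{j-1}\equiv 2$, which is circular. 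Instead I would invoke the other branch: for $k=j-2$ even, Lemma~\ref{ppp} also yields... actually the clean route is d'Ocagne's identity or Lemma~\ref{l1}(7): take $F_{k+1}F_n-F_kF_{n+1}=(-1)^{n+1}F_{k-n}$ with $k=j-1$, $n=j-3$, giving $F_jF_{j-3}-F_{j-1}F_{j-2}=(-1)^{j-2}F_2=1$ (since $j$ is even), hence $F_{j-1}F_{j-2}\equiv -1\pmod{F_j}$. Then from $F_{j-3}=F_{j-1}-F_{j-2}$ one gets $F_{j-1}F_{j-3}=F_{j-1}^2-F_{j-1}F_{j-2}\equiv F_{j-1}^2+1\pmod{F_j}$, and since $F_{j-1}=F_j-F_{j-2}$ we have $F_{j-1}^2\equiv F_{j-2}^2\pmod{F_j}$; combining, $F_{j-1}F_{j-3}\equiv F_{j-2}^2+1$. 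Finally Lemma~\ref{ppp} with even $k=j-2$ gives $F_{j-2}^2+1=F_{j-3}F_{j-1}$ — still circular, so I would close the loop by instead applying $F_k^2-1=F_{k-2}F_{k+2}$ with $k=j$ (odd? no, $j$ even): that gives $F_j^2-1=F_{j-2}F_{j+2}\equiv 0$, hence $F_{j-2}F_{j+2}\equiv -1$, not quite it either. The cleanest single identity is Lemma~\ref{l1}(7) with $k=n=j-2$ is degenerate; rather use Catalan/Vajda: $F_{j-2}^2 - F_{j-1}F_{j-3} = (-1)^{j-3}$, i.e. $F_{j-2}^2 - F_{j-1}F_{j-3} = -1$ since $j$ is even, giving $F_{j-1}F_{j-3} = F_{j-2}^2+1$, and separately $F_{j-2}^2 \equiv F_{j-1}^2 \pmod{F_j}$ together with $F_{j-1}F_{j-2}\equiv -1$ from d'Ocagne forces (by writing $F_{j-3}=2F_{j-1}-F_j$ modulo $F_j$, since $F_{j-3}=F_{j-1}-F_{j-2}$ and $F_{j-2}=F_j-F_{j-1}$, so $F_{j-3}\equiv 2F_{j-1}\pmod{F_j}$) the identity $F_{j-1}F_{j-3}\equiv 2F_{j-1}^2\pmod{F_j}$; thus we need $F_{j-1}^2\equiv 1\pmod{F_j}$, which is exactly Lemma~\ref{ppp} rewritten: $F_{j-1}^2\equiv F_{j-2}^2$ and $F_{j-1}^2 - F_{j-2}^2 = (F_{j-1}-F_{j-2})(F_{j-1}+F_{j-2}) = F_{j-3}F_j\equiv 0$, so it suffices that $F_{j-2}^2\equiv 1\pmod{F_j}$, and this is Lemma~\ref{ppp}\,(first display, even case): $F_{j-2}^2 - 1 = F_{j-4}F_{j}$ — wait that is the case $k\equiv 0\pmod 2$ with $k=j-2$, which requires $j$ even: yes. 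Done: $F_{j-2}^2 - 1 = F_{j-4}F_j \equiv 0\pmod{F_j}$.

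In summary, the argument runs: (i) $F_{j-1}$ is a unit mod $F_j$, so $F_{j-1}^{\varphi(F_j)-1}\equiv F_{j-1}^{-1}$; (ii) reduce to $F_{j-1}F_{j-3}\equiv 2\pmod{F_j}$; (iii) since $j$ is even, $F_{j-2}\equiv -F_{j-1}$ and $F_{j-3}\equiv 2F_{j-1}\pmod{F_j}$, so $F_{j-1}F_{j-3}\equiv 2F_{j-1}^2\pmod{F_j}$; (iv) $F_{j-1}^2 \equiv F_{j-2}^2 \pmod{F_j}$ because their difference is $F_{j-3}F_j$; (v) $F_{j-2}^2\equiv 1\pmod{F_j}$ by Lemma~\ref{ppp} (even case, $k=j-2$), since $F_{j-2}^2-1=F_{j-4}F_j$. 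The main obstacle is bookkeeping: one must be careful that $j$ even forces $j-2$ even (so the right branch of Lemma~\ref{ppp} applies) and track the parity-dependent signs; none of the individual steps is deep, but it is easy to go in circles, so the discipline of always reducing to $F_{j-2}^2\equiv 1\pmod{F_j}$ via the telescoping difference of squares is what makes the proof clean.
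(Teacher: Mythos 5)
Your final argument is correct and follows essentially the same route as the paper: use $\gcd(F_{j-1},F_j)=1$ and Euler's theorem to reduce the claim to $F_{j-1}F_{j-3}\equiv 2\pmod{F_j}$, then verify that congruence by elementary Fibonacci identities. The only difference is the closing identity — the paper finishes in one line with d'Ocagne (Lemma \ref{l1}.7), while you finish via $F_{j-3}\equiv 2F_{j-1}$ and $F_{j-1}^2\equiv F_{j-2}^2\equiv 1\pmod{F_j}$ from Lemma \ref{ppp} — which is an equally valid, equally elementary verification.
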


\begin{proof}
	Since $\gcd(F_{j-1},F_j)=1$, and $F_{j-1}^{\varphi(F_j)}\equiv 1\pmod{F_j}$, it is sufficient to show that
	$$2\equiv F_{j-3}F_{j-1}\;\pmod{F_j}.$$ 
	But 
	$$
	F_{j-1}F_{j-3}=(F_{j+1}-F_j)F_{j-3}\equiv F_{j+1}F_{j-3}=F_jF_{j-2}+(-1)^{j-2}F_3\equiv2\pmod{F_j}
	$$
	follows from the definition of the Fibonacci numbers, d'~Ocagne's identity (Lemma \ref{l1}.7), and the parity of $j$.
\end{proof}

\begin{lemma}\label{l3}
	If $j\ge3$ is odd, then
	$$
	F_{j-1}^{\varphi(F_j)-1}\equiv F_{j-2}\pmod{F_j}.
	$$
\end{lemma}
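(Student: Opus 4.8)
The plan is to mimic the proof of Lemma \ref{l2}, exploiting the fact that $\gcd(F_{j-1},F_j)=1$ so that $F_{j-1}$ is invertible modulo $F_j$ and $F_{j-1}^{\varphi(F_j)}\equiv 1\pmod{F_j}$. Multiplying both sides of the desired congruence by $F_{j-1}$, it suffices to prove that
$$
1\equiv F_{j-2}F_{j-1}\pmod{F_j},
$$
which is a much more tractable identity. So the whole argument reduces to establishing this single congruence for odd $j\ge 3$.

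To prove $F_{j-2}F_{j-1}\equiv 1\pmod{F_j}$, I would rewrite $F_{j-1}=F_{j+1}-F_j$ (the defining recurrence), so that modulo $F_j$ we have $F_{j-2}F_{j-1}\equiv F_{j-2}F_{j+1}\pmod{F_j}$. Now d'Ocagne's identity (Lemma \ref{l1}.7), in the form $F_{k+1}F_n-F_kF_{n+1}=(-1)^{n+1}F_{k-n}$, can be applied with a suitable choice of indices — taking $k=j+1$ and $n=j-2$ gives $F_{j+2}F_{j-2}-F_{j+1}F_{j-1}=(-1)^{j-1}F_{3}=2$ for odd $j$; alternatively one can use Lemma \ref{l1}.8 ($F_{k+n}=F_kF_{n+1}+F_{k-1}F_n$) with $k=j-1$, $n=1$, or simply invoke Catalan/Cassini-type identities. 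The cleanest route is to note $F_{j+1}F_{j-2}=F_jF_{j-1}+(-1)^{j-1}F_2=F_jF_{j-1}+1$ for odd $j$ (a direct instance of d'Ocagne's identity with the roles arranged so the remaining Fibonacci term is $F_2=1$), whence $F_{j-2}F_{j-1}\equiv F_{j-2}F_{j+1}\equiv 1\pmod{F_j}$, as required.

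The only real subtlety is bookkeeping the sign $(-1)^{\pm}$ coming from the parity of $j$ and verifying that with the odd-$j$ hypothesis the residual Fibonacci term is exactly $F_2=1$ (as opposed to $F_3=2$, which is what appears in the even case of Lemma \ref{l2} and explains the absence of the factor $2$ here). I expect this parity check to be the main — though minor — obstacle; once the indices in d'Ocagne's identity are pinned down correctly, the rest is immediate. I would therefore present the proof in two lines, exactly parallel to Lemma \ref{l2}: first the reduction to $F_{j-2}F_{j-1}\equiv 1\pmod{F_j}$ using invertibility of $F_{j-1}$, then the chain
$$
F_{j-2}F_{j-1}=F_{j-2}(F_{j+1}-F_j)\equiv F_{j-2}F_{j+1}=F_jF_{j-1}+(-1)^{j-1}F_2\equiv 1\pmod{F_j},
$$
citing the Fibonacci recurrence, d'Ocagne's identity, and the parity of $j$.
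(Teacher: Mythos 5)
Your proposal is correct and follows essentially the same route as the paper: both reduce the claim, via invertibility of $F_{j-1}$ and Euler's theorem, to $F_{j-2}F_{j-1}\equiv 1\pmod{F_j}$, and then verify this with the recurrence plus a Cassini/d'Ocagne-type identity and the parity of $j$. The only (cosmetic) difference is that the paper substitutes $F_{j-2}=F_j-F_{j-1}$ and uses $F_{j-1}^2=F_{j-2}F_j+(-1)^jF_{-1}$, while you substitute $F_{j-1}=F_{j+1}-F_j$ and apply d'Ocagne's identity in the form $F_{j+1}F_{j-2}-F_jF_{j-1}=(-1)^{j-1}F_2$; both index choices check out.
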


\begin{proof}
	Similarly to the proof of the previous lemma, the statement is equivalent to
	$$1\equiv F_{j-2}F_{j-1}\;\pmod{F_j}.$$ 
	And it is easy to see that
	$$
	F_{j-2}F_{j-1}=(F_{j}-F_{j-1})F_{j-1}\equiv -F_{j-1}^2=-(F_{j-2}F_j+(-1)^jF_{-1})\equiv1\pmod{F_j}.
	$$
\end{proof}

\begin{lemma} \label{l:becs1}
	Let $k_0$ be a positive integer, and for $i\in\{0,1\}$ put
	$$
	\delta_{i}=\log_{\alpha}\left(\frac{1+(-1)^{i-1}\left(|\beta|/\alpha\right)^{k_0}}{\sqrt{5}}\right),
	$$
	where $\log_{\alpha}$ is the logarithm
	in base $\alpha=(1+\sqrt{5})/2$. Then for all integers $k\ge k_0$, the two
	inequalities
	\begin{equation*}
	\label{eq:2} \alpha^{k+\delta_0}\le F_k\le\alpha^{k+\delta_1}
	\end{equation*}
	hold.
\end{lemma}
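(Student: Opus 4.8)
The plan is to derive the two bounds directly from the Binet formula $F_k = (\alpha^k - \beta^k)/(\alpha-\beta)$, together with $\alpha-\beta = \sqrt{5}$ and $\beta = -1/\alpha$, so that
\[
F_k = \frac{\alpha^k - \beta^k}{\sqrt{5}} = \frac{\alpha^k}{\sqrt{5}}\left(1 - \left(\frac{\beta}{\alpha}\right)^{k}\right).
\]
Since $\beta/\alpha = -1/\alpha^2$ is negative with absolute value $(|\beta|/\alpha)$, the sign of $(\beta/\alpha)^k$ alternates with the parity of $k$; writing $(\beta/\alpha)^k = (-1)^k(|\beta|/\alpha)^k$, the factor $1 - (\beta/\alpha)^k$ equals $1 - (|\beta|/\alpha)^k$ when $k$ is even and $1 + (|\beta|/\alpha)^k$ when $k$ is odd. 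In either case this factor lies between $1 - (|\beta|/\alpha)^{k}$ and $1 + (|\beta|/\alpha)^{k}$.

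Next I would use monotonicity in $k$: because $0 < |\beta|/\alpha < 1$, the quantity $(|\beta|/\alpha)^k$ is decreasing in $k$, so for every $k \ge k_0$ we have $(|\beta|/\alpha)^k \le (|\beta|/\alpha)^{k_0}$. Hence, regardless of parity,
\[
1 - \left(\frac{|\beta|}{\alpha}\right)^{k_0} \;\le\; 1 - \left(\frac{\beta}{\alpha}\right)^{k} \;\le\; 1 + \left(\frac{|\beta|}{\alpha}\right)^{k_0}.
\]
Multiplying through by $\alpha^k/\sqrt{5}$ gives
\[
\frac{\alpha^k}{\sqrt{5}}\left(1 - \left(\frac{|\beta|}{\alpha}\right)^{k_0}\right) \;\le\; F_k \;\le\; \frac{\alpha^k}{\sqrt{5}}\left(1 + \left(\frac{|\beta|}{\alpha}\right)^{k_0}\right).
\]

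Finally I would take logarithms in base $\alpha$. The left-hand bound has the form $\alpha^k \cdot \bigl(1 - (|\beta|/\alpha)^{k_0}\bigr)/\sqrt 5 = \alpha^{k + \delta_0}$ with $\delta_0 = \log_\alpha\bigl((1 - (|\beta|/\alpha)^{k_0})/\sqrt5\bigr)$, which is exactly the $i=0$ case of the stated $\delta_i$ (there $(-1)^{i-1} = -1$); the right-hand bound is $\alpha^{k+\delta_1}$ with the $i=1$ case (there $(-1)^{i-1} = +1$). This yields $\alpha^{k+\delta_0} \le F_k \le \alpha^{k+\delta_1}$ for all $k \ge k_0$, as claimed. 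The only mild subtlety — hardly an obstacle — is checking that $1 - (|\beta|/\alpha)^{k_0} > 0$ so that $\delta_0$ is well defined, which holds because $|\beta|/\alpha < 1$; and that the parity-based case split is correctly absorbed by the two-sided estimate above, so no separate treatment of even and odd $k$ is needed.
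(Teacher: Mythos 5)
Your proof is correct: the Binet-formula decomposition $F_k=\frac{\alpha^k}{\sqrt5}\bigl(1-(\beta/\alpha)^k\bigr)$, the parity-independent two-sided bound, the monotonicity of $(|\beta|/\alpha)^k$ for $k\ge k_0$, and the identification of the resulting bounds with $\alpha^{k+\delta_0}$ and $\alpha^{k+\delta_1}$ (including the sign convention $(-1)^{i-1}$ and the positivity of $1-(|\beta|/\alpha)^{k_0}$) all check out. The paper itself gives no argument here, merely citing part of Lemma 5 of Luca and Szalay; your write-up is the standard derivation underlying that cited result, so it is a complete, self-contained substitute for the citation.
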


\begin{proof}
	This is a part of Lemma 5 in \cite{LSz}.
\end{proof}

In order to make the application of Lemma \ref{l:becs1} more
convenient, we shall suppose that $k_0\ge1$. Then we have 

\begin{cor}\label{co}
	If $k\ge1$, then 
	$$\alpha^{k-2}\le F_k\le \alpha^{k-1},$$
	and equality holds if and only if $k=2$, and $k=1$, respectively.
\end{cor}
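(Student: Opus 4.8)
The plan is to reduce the corollary to Binet's formula together with a short parity discussion, which is exactly what is needed to locate the two equality cases. Writing $F_k=(\alpha^k-\beta^k)/\sqrt5$ and using $\beta/\alpha=-1/\alpha^2$, one obtains
\begin{equation*}
\frac{F_k}{\alpha^k}=\frac{1-(-1)^k\alpha^{-2k}}{\sqrt5},
\end{equation*}
so it is natural to treat $k$ even and $k$ odd separately.

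For even $k\ge2$ the quantity $g(k)=(1-\alpha^{-2k})/\sqrt5$ is strictly increasing in $k$; its smallest value, attained at $k=2$, equals $\alpha^{-2}$ --- this is the identity $\alpha^4-1=\sqrt5\,\alpha^2$, which one checks from $\alpha^2=\alpha+1$ (both sides equal $3\alpha+1$) --- while $g(k)<1/\sqrt5<\alpha^{-1}$ for every such $k$. Multiplying by $\alpha^k$ gives $\alpha^{k-2}\le F_k<\alpha^{k-1}$ for even $k$, with equality in the lower bound precisely at $k=2$. For odd $k\ge1$ the quantity $h(k)=(1+\alpha^{-2k})/\sqrt5$ is strictly decreasing; its largest value, attained at $k=1$, equals $\alpha^{-1}$ --- the identity $\alpha^2+1=\sqrt5\,\alpha$, both sides equal to $\alpha+2$ --- while $h(k)>1/\sqrt5>\alpha^{-2}$ for every such $k$, the last inequality being $\alpha^4>5$. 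Hence $\alpha^{k-2}<F_k\le\alpha^{k-1}$ for odd $k$, with equality in the upper bound precisely at $k=1$.

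Putting the two parities together: $\alpha^{k-2}\le F_k$ for all $k\ge1$, and equality forces $k$ even with $g(k)=g(2)$, hence $k=2$; likewise $F_k\le\alpha^{k-1}$ for all $k\ge1$, with equality only for $k=1$. The only genuine computation is the verification of the two displayed identities for $\alpha$; the one point to watch is that no single value of $k_0$ in Lemma \ref{l:becs1} delivers both sharp equality cases at once, which is why carrying out the parity split by hand is the cleanest route. (If one prefers, Lemma \ref{l:becs1} with $k_0=2$ gives the two non-strict inequalities immediately, and $k_0=3$ then makes the lower bound strict for $k\ge3$.)
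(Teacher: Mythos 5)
Your proof is correct, and it is a genuinely self-contained route: the paper gives no written argument for the corollary at all, but presents it as a consequence of Lemma \ref{l:becs1} (quoted from \cite{LSz}) after the remark that one may take $k_0\ge1$. Your direct Binet computation with the parity split does more work but also delivers more: note that applying Lemma \ref{l:becs1} with $k_0=1$ gives the upper bound exactly (since $(1+\alpha^{-2})/\sqrt5=\alpha^{-1}$, i.e.\ $\delta_1=-1$), but the corresponding $\delta_0=\log_\alpha\bigl((1-\alpha^{-2})/\sqrt5\bigr)\approx-2.67$ is weaker than $-2$, so the lower bound really requires $k_0=2$ (where $(1-\alpha^{-4})/\sqrt5=\alpha^{-2}$) together with a separate check of $k=1$; moreover the lemma says nothing about when equality occurs. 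Your monotonicity argument in each parity class — $g(k)=(1-\alpha^{-2k})/\sqrt5$ increasing with $g(2)=\alpha^{-2}$, $h(k)=(1+\alpha^{-2k})/\sqrt5$ decreasing with $h(1)=\alpha^{-1}$, and the two verified identities $\alpha^4-1=\sqrt5\,\alpha^2$, $\alpha^2+1=\sqrt5\,\alpha$ — is exactly what pins down the ``if and only if'' part, which the paper leaves implicit. So the paper's approach buys brevity by citation, while yours buys independence from the external lemma and an explicit characterization of the equality cases; both are valid, and your concluding parenthetical correctly records how the lemma could be used instead (with $k_0=2$ for $k\ge2$ plus the hand check at $k=1$).
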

Now, we are ready to justify the theorems.
\section{Proofs}

{\bf Proof of Theorem \ref{th1}.} 

Verifying the cases $k=1,\dots,5$ by hand we found the solutions listed in Theorem \ref{th1}. Put $\kappa=k+2$, and suppose that $\kappa\ge8$. Consequently, $F_{\kappa-3}\ge5$ and $F_{\kappa}\ge21$. If equation (\ref{eq1}) holds, then $n>\kappa$, and then by Lemma \ref{l1}.1 we conclude
\begin{equation}\label{egesz}
k=\frac{F_n+F_{\kappa+1}-2}{F_\kappa}=\frac{F_n+F_{\kappa-1}-2}{F_\kappa}+1\in\mathbb{N}.
\end{equation}
In the sequel, we study the sequence $(F_u)_{u=0}^\infty$ modulo $F_\kappa$ if $\kappa$ is fixed. Note that we indicate a suitable value congruent to $F_u$ modulo $F_\kappa$, not always the smallest non-negative remainders. The  period can be deduced from the range 
$$
\overbrace{0,1,1,2,\dots,F_{\kappa-2},F_{\kappa-1}}^{\kappa},\overbrace{0,F_{\kappa-1},F_{\kappa-1},2F_{\kappa-1},\dots,F_{\kappa-2}F_{\kappa-1},F_{\kappa-1}F_{\kappa-1}}^{\kappa}\,,
$$
of length $2\kappa$ if $\kappa$ is even, since then, by Lemma \ref{l1}.7 we have
$F_{\kappa-1}^2\equiv1\pmod{F_\kappa}$ and then $$F_{\kappa-2}F_{\kappa-1}=(F_{\kappa}-F_{\kappa-1})F_{\kappa-1}\equiv-1\pmod{F_\kappa}.$$ 

In case of odd $\kappa$ we have $F_{\kappa-1}^2\equiv-1\pmod{F_\kappa}$, therefore the length of the period is $4\kappa$ coming from 
\begin{eqnarray*}
&&\overbrace{0,1,1,2,\dots,F_{\kappa-2},F_{\kappa-1}}^{\kappa},\overbrace{0,F_{\kappa-1},F_{\kappa-1},2F_{\kappa-1},\dots,F_{\kappa-2}F_{\kappa-1},F_{\kappa-1}F_{\kappa-1}}^{\kappa},\\
&&\qquad\overbrace{0,-1,-1,-2,\dots,-F_{\kappa-2},-F_{\kappa-1}}^{\kappa},\overbrace{0,-F_{\kappa-1},-F_{\kappa-1},-2F_{\kappa-1},\dots,-F_{\kappa-2}F_{\kappa-1},-F_{\kappa-1}F_{\kappa-1}}^{\kappa}.\\
\end{eqnarray*}
\bigskip
Based on the length of the period we distinguish two cases.

{\bf Case I: $\kappa$ is even.}
Either $F_n\equiv F_j$ or $F_n\equiv F_jF_{\kappa-1}$ modulo $\kappa$ holds for some $j=0,1,\dots,\kappa-1$. 
Hence 
\begin{equation}\label{elag1}
F_n+F_{\kappa-1}-2 \equiv \begin{cases} F_j+F_{\kappa-1}-2,\,{\rm or} &\mbox{}  \\ 
F_jF_{\kappa-1}+F_{\kappa-1}-2 & \mbox{} \end{cases} \pmod{F_\kappa}.
\end{equation}

We will show that none of them is congruent to 0 modulo $F_\kappa$. In the first branch
$$
F_j+F_{\kappa-1}-2\ge F_{\kappa-1}-2\ge11,
$$
further if $j\ne\kappa-1$, then
$$
F_j+F_{\kappa-1}-2\le F_{\kappa-2}+F_{\kappa-1}-2\le F_\kappa-2.
$$
Thus $F_j+F_{\kappa-1}-2\not\equiv0\pmod{F_\kappa}$, hence (\ref{egesz}) does not hold. Assume now, that $j=\kappa-1$. Then, together with the definition of the Fibonacci sequence we have
$$
F_j+F_{\kappa-1}-2=F_{\kappa-1}+(F_{\kappa}-F_{\kappa-2})-2\equiv F_{\kappa-3}-2\pmod{F_\kappa}.
$$
But $3\le F_{\kappa-3}-2<F_\kappa$ contradicts to (\ref{egesz}).

Choosing the second branch of (\ref{elag1}), suppose that $F_{\kappa-1}(F_j+1)-2$ is congruent to 0 modulo $F_\kappa$.
Then
$$
F_{\kappa-1}^{\varphi(F_\kappa)}(F_j+1)\equiv 2F_{\kappa-1}^{\varphi(F_\kappa)-1}\pmod{F_\kappa}.
$$
Subsequently, by Lemma \ref{l2}, it leads to
$$
F_j+1\equiv F_{\kappa-3}\pmod{F_\kappa}.
$$
Since $j=0,1,\dots,\kappa-1$, ($\kappa\ge8$) it follows that $F_j=F_{\kappa-3}-1$, a contradiction.
\bigskip

{\bf Case II: $\kappa$ is odd.}
Now $\kappa\ge9$, and either $F_n\equiv \pm F_j\;\pmod{F_\kappa}$ or $F_n\equiv \pm F_jF_{\kappa-1}\pmod{F_\kappa}$ holds for some $j=0,1,\dots,\kappa-1$. 
Hence 
\begin{equation*}\label{elag2}
F_n+F_{\kappa-1}-2 \equiv \begin{cases} \pm F_j+F_{\kappa-1}-2 &\mbox{}  \\ 
\pm F_jF_{\kappa-1}+F_{\kappa-1}-2 & \mbox{} \end{cases} \pmod{F_\kappa}.
\end{equation*}
\bigskip
First, obviously, if $j\ne\kappa-1$, then
$$
6\le F_{\kappa-3}-2\le\pm F_j+F_{\kappa-1}-2\le F_\kappa-2,
$$
so dividing $\pm F_j+F_{\kappa-1}-2$ by $F_\kappa$, the result is not an integer. If $j=\kappa-1$, then the treatment of the ``$+$'' case coincides the treatment when $\kappa$ was even. The ``$-$''
case leads to $F_n+F_{\kappa-1}-2 \equiv -2\pmod{F_\kappa}$, a contradiction.

Assume now that $F_n+F_{\kappa-1}-2 \equiv\pm F_jF_{\kappa-1}+F_{\kappa-1}-2\pmod{F_\kappa}$. Thus
$F_{\kappa-1}(1\pm F_j)\equiv2\pmod{F_\kappa}$. Multiplying both sides by $F_{\kappa-1}^{\varphi(F_\kappa)-1}$, by Lemma \ref{l3} it gives
$$
1\pm F_j\equiv2 F_{\kappa-2}\pmod{F_\kappa}.
$$
First let $F_j=2F_{\kappa-2}-1$, which leads immediately a contradiction via $0<2F_{\kappa-2}-1=F_{\kappa-1}+F_{\kappa-4}-1<F_\kappa$. If $F_\kappa-F_j+1=2F_{\kappa-2}$, then $F_j=F_{\kappa-3}+1$ follows, a contradiction again. The proof of Theorem \ref{th1} is complete.
\bigskip

{\bf Proof of Theorem \ref{th2}.} 

For the range $k=1,2,\dots,20$ we checked (\ref{eq2}) by hand. From now we assume $k\ge21$.
Based on Lemma \ref{l1}.2, we must distinguish two cases.
\bigskip

{\bf Case I: $k$ is even.} Consider the equation
\begin{equation*}\label{c}
F_k(kF_{k+1}-F_k)=F_n^2.
\end{equation*} 
Trivially, $n>k$. Put $\nu=\gcd(k,n)$.


If $\nu=k$, then $F_k\mid F_n$ by Lemma \ref{l1}.6. Consequently, 
$$
\left(\frac{F_n}{F_k}\right)^2=\frac{kF_{k+1}-F_k}{F_k}=\frac{kF_{k+1}}{F_k}-1
$$
is integer. But $F_k$ and $F_{k+1}$ are coprime, hence $F_k\mid k$, and it results $k\le5$, a contradiction.

Examine the possibility $\nu=k/2$. Put $\kappa=k/2$. Now
$F_\kappa L_\kappa(kF_{k+1}-F_\kappa L_\kappa)=F_n^2$
leads to
$$
\frac{L_\kappa(kF_{k+1}-F_\kappa L_\kappa)}{F_\kappa}=\left(\frac{F_n}{F_\kappa}\right)^2.
$$
This is an equality of integers, which together with $\gcd(F_\kappa,F_{k+1})$ and $\gcd(F_\kappa,L_\kappa)=1,2$ (see Lemma \ref{l1}.5) shows that
$2k/F_\kappa$ is integer. Thus $k\le14$, a contradiction.

Finally, we have $3\le\nu\le k/3$. Since $\gcd(F_k/F_\nu,F_n/F_\nu)=1$, then from the equation
$$
\frac{F_k}{F_\nu}(kF_{k+1}-F_k)=\frac{F_n}{F_\nu}F_n
$$
we conclude
$$
\frac{F_k}{F_\nu}\mid F_n\qquad {\rm and}\qquad \frac{F_n}{F_\nu}\mid kF_{k+1}-F_k.
$$
Subsequently, $F_k\mid F_\nu F_n$ and $F_n\mid F_\nu(kF_{k+1}-F_k)$. Thus $F_k\mid F_\nu^2(kF_{k+1}-F_k)$, and then
$F_k\mid kF_\nu^2$ holds since $\gcd(F_k,F_{k+1})=1$. Applying Corollary \ref{co}, we obtain
$$
\alpha^{k-2}\le F_k\le kF_\nu^2\le k\alpha^{2\nu-2}\le k\alpha^{2/3k-2},
$$
which implies $k<19$.

\bigskip

{\bf Case II: $k$ is odd.} In this part, we follow the idea of the previous case. Recall that $k\ge21$. Now 
$$F_k(kF_{k+1}-F_k)=F_{n-\ve}F_{n+\ve},$$ 
where $\ve=1$ or $2$ depending on the parity of $n$ (see Lemma \ref{ppp}). Clearly, $\gcd(n-\ve,n+\ve)=2$ or 4. Thus $\gcd(F_{n-\ve},F_{n+\ve})=1$ or 3, respectively.

Put $\nu_1=\gcd(k,n-\ve)$ and $\nu_2=\gcd(k,n+\ve)$. Obviously, $\gcd(\nu_1,\nu_2)$ divides $\gcd(n-\ve,n+\ve)$. Hence
$\nu=\gcd(\nu_1,\nu_2)=1$ or 2 or 4, and then $F_\nu=\gcd(F_{\nu_1},F_{\nu_2})=1$ or 3. Thus $F_{\nu_1}F_{\nu_2}\mid F_\nu F_k$. The terms of both the left and right sides of
$$
\frac{F_k}{F_{\nu_1}}(kF_{k+1}-F_k)=\frac{F_{n-\ve}}{F_{\nu_1}}F_{n+\ve} \qquad{\rm and}\qquad
\frac{F_k}{F_{\nu_2}}(kF_{k+1}-F_k)=F_{n-\ve}\frac{F_{n+\ve}}{F_{\nu_2}}
$$
are integers, and
$$
\frac{F_{n-\ve}}{F_{\nu_1}}\mid kF_{k+1}-F_k\;,\qquad \frac{F_k}{F_{\nu_2}}\mid F_{n-\ve}.
$$
Combining them, $F_k\mid F_{\nu_1}F_{\nu_2}(kF_{k+1}-F_k)$ follows, and then $F_k\mid kF_{\nu_1}F_{\nu_2}$. The remaining part of the proof consists of three cases.

Suppose first that $\nu_1=k$, i.e.~$F_k\mid F_{n-\ve}$. Observe, that $n-\ve$ and $n+\ve$ are even, and $k\mid (n-\ve)/2$. Thus $k+1\le (n-\ve)/2+1$, which does not exceed $(n+\ve)/2$. Then
$$
kF_{k+1}>\frac{F_{n-\ve}}{F_{\nu_1}}F_{n+\ve}=\frac{F_{(n-\ve)/2}}{F_{\nu_1}}L_{(n-\ve)/2}F_{n+\ve}\ge L_{(n-\ve)/2}L_{(n+\ve)/2}F_{(n+\ve)/2}\ge L_{(n-\ve)/2}L_{(n+\ve)/2}F_{k+1}.
$$
Simplifying by $F_{k+1}$ we conclude
$$
\frac{n-\ve}{2}\ge k> L_{(n-\ve)/2}L_{(n+\ve)/2},
$$
and we arrived at a contradiction since $21\le k<n$.
Note that the same machinery works when $\nu_2=k$, i.e.~$F_k\mid F_{n+\ve}$.

If none of the two conditions above holds, we can assume $\nu_1\le k/3$ and $\nu_2\le k/3$. Indeed, $k$ is odd, so the largest non-trivial divisor of $k$ is at most $k/3$. The application of Corollary \ref{co} gives
$$
\alpha^{k-2}\le F_k\le kF_{\nu_1}F_{\nu_2}\le k\alpha^{\nu_1-1}\alpha^{\nu_2-1}\le k\alpha^{2k/3-2},
$$
and then $k<19$.

The proof of the theorem is complete.

\bigskip

{\bf Proof of Theorem \ref{th3}.} The proof partially follows the proof of Theorem \ref{th1}. The small cases of (\ref{eq3}) can be verified by hand. 
Suppose $\kappa=k+2\ge9$. Similarly to (\ref{egesz}), we have
\begin{equation}\label{egesz1}
k=\frac{F_n^2+F_{\kappa+1}-2}{F_\kappa}=\frac{F_n^2+F_{\kappa-1}-2}{F_\kappa}+1\in\mathbb{N}.
\end{equation}
Now we study the sequence $(F_u^2)_{u=0}^\infty$ modulo $F_\kappa$, and we again indicate the most suitable values by modulo $F_\kappa$, not always the smallest non-negative remainders. Lemma \ref{l1}.10, together with Lemma \ref{l1}.3 
implies
\begin{equation*}\label{sh}
F_{2\kappa\pm j}^2\equiv F_j^2 \pmod{F_\kappa},
\end{equation*}
where $j=0,1,\dots,\kappa-1$. Hence the period having length $2\kappa$ can be given by 
$$
\overbrace{0,1,1,2^2,\dots,F_{\kappa-2}^2,F_{\kappa-1}^2}^{\kappa},\overbrace{0,F_{\kappa-1}^2,F_{\kappa-2}^2,\dots,2^2,1,1}^{\kappa}\,.
$$
Let us distinguish two cases according to the parity of $\kappa$. 

{\bf Case I: $\kappa$ is even.} Put $\kappa=2\ell$. Again by Lemma \ref{l1}.10, together with $\kappa-i=2\ell-i=\ell+(\ell-i)$ and $i=\ell-(\ell-i)$ admits $F_{\kappa-i}^2\equiv F_i^2 \pmod{F_\kappa}$. It reduces the possibilities to $j=0,1,\dots,\ell$.

If  $j\le\ell-1=(\kappa-2)/2$, then
$$
0<F_j^2+F_{\kappa-1}-2\le F_{(\kappa-2)/2}^2+F_{\kappa-1}-2\le F_{\kappa-2}+F_{\kappa-1}-2<F_\kappa
$$
hold since $F_{(\kappa-2)/2}^2\le F_{(\kappa-2)/2}L_{(\kappa-2)/2}=F_{\kappa-2}$.

Suppose now that $j=\ell=\kappa/2$. Repeating the previous idea we find $F_{\kappa/2}^2+F_{\kappa-1}-2\le F_{\kappa}+F_{\kappa-1}-2<2F_\kappa$. Consequently, $F_{\kappa/2}^2+F_{\kappa-1}-2=F_\kappa$ might be fulfilled. Thus
$F_{\kappa/2}^2-2=F_{\kappa-2}$. Recalling $\kappa=2\ell$ we equivalently obtain
$$
F_\ell^2-1=F_{2\ell-2}+1.
$$ 
Both sides have decomposition described in Lemma \ref{ppp} and Lemma \ref{lfp}, respectively, providing
$$
F_{\ell-1}F_{\ell+1}=F_{\ell-2}L_{\ell}\qquad{\rm or}\qquad F_{\ell-2}F_{\ell+2}=F_{\ell}L_{\ell-2}
$$
if $\ell$ is odd or even, respectively.
Firstly, $F_{\ell-2}\mid F_{\ell-1}F_{\ell+1}$, and then $F_{\ell-2}\mid F_{\ell+1}$ holds only for small $\ell$ values.  Secondly, $F_{\ell}\mid F_{\ell-2}F_{\ell+2}$ contradicts to $\ell\ge5$.

{\bf Case II: $\kappa$ is odd.} Now we have $F_{\kappa-i}^2\equiv -F_{i}^2 \pmod{F_\kappa}$. Indeed, Lemma \ref{l1}.8 admits
$$
F_{\kappa-i}^2=(F_\kappa F_{-i+1}+F_{\kappa-1}F_{-i})^2\equiv F_{\kappa-1}^2F_i^2 \pmod{F_\kappa},
$$
and then Lemma \ref{ppp} justifies the statement. 
It makes possible to split the proof into a few parts.

If $j=(\kappa-1)/2$, then $F_{(\kappa-1)/2}^2+F_{\kappa-1}-2<2F_{\kappa-1}-2<2F_\kappa$. Thus
$F_{(\kappa-1)/2}^2+F_{\kappa-1}-2=F_{\kappa}$ is the only one chance to fulfill (\ref{egesz1}). Then apply Lemma \ref{l:becs1} with $k_0=4\le (\kappa-1)/2$ for $F_{\kappa-2}<F_{(\kappa-1)/2}^2$ to reach a contradiction.

If $j\le(\kappa-3)/2$, then
$$
0<F_j^2+F_{\kappa-1}-2\le F_{(\kappa-3)/2}^2+F_{\kappa-1}-2\le F_{\kappa-3}+F_{\kappa-1}-2<F_\kappa
$$
holds since $F_{(\kappa-3)/2}^2\le F_{(\kappa-3)/2}L_{(\kappa-3)/2}=F_{\kappa-3}$.

Finally, if $(\kappa+1)/2\le j\le \kappa-1$, then 
$$
F_j^2+F_{\kappa-1}-2\equiv -F_{\kappa-j}^2 +F_{\kappa-1}-2 \pmod{F_\kappa},
$$
where $1\le J=\kappa-j\le(\kappa-1)/2$. Thus we need to check the equation
$F_J^2+2=F_{\kappa-1}$, since $-2\le -F_J^2+F_{\kappa-1}-2<F_{\kappa-1}<F_\kappa$. When $J\le(\kappa-3)/2$ holds then
$F_J^2+2<F_{\kappa-3}+2<F_{\kappa-1}$. Lastly, $J=(\kappa-1)/2$ leads to
$F_{(\kappa-1)/2}^2+2=F_{\kappa-1}$, and then to $2=F_{J}\left(L_{J}-F_{J}\right)$.
Obviously, it gives $J\le3$. Thus $\kappa\le7$, a contradiction.

\bigskip

{\bf Proof of Theorem \ref{th4}.} 
The statement for (\ref{eq4}) is obviously true if $k\le12$. So we may  assume $k\ge13$. Let $\tau\in\{0,1\}$.
The formula of the summation, together with Corollary \ref{co} implies
$$
\alpha^{n-2}\le F_n=F_k(kF_{k+1}-F_k)+\tau<kF_kF_{k+1}\le\alpha^{\log_\alpha k+k-1+(k+1)-1},
$$
and then
$$
n-k<k+1+\log_\alpha k<\frac{3}{2}k.
$$
Similarly,
$$
\alpha^{n-1}\ge F_n=F_k(kF_{k+1}-F_k)+\tau> F_k(kF_{k+1}-F_{k+1})=(k-1)F_kF_{k+1}>\alpha^{\log_\alpha (k-1)+k-2+(k+1)-2},
$$
subsequently
$$
n-k>k-2+\log_\alpha(k-1)> k+3, 
$$
that is $n-k\ge k+4$.
Putting together the two estimates it gives
\begin{equation}\label{Dor}
2k+4\le n<\frac{5}{2}k.
\end{equation}

{\bf Case I: $k$ is even.} Clearly, $k+2< n-k-1\le3k/2-2$ holds.
By Lemma \ref{l1}.8, we conclude
$$
F_n=F_{k+1}F_{n-k}+F_kF_{n-k-1}=kF_kF_{k+1}-F_k^2,
$$
and equivalently
$$
F_k(F_k+F_{n-k-1})=F_{k+1}(kF_k-F_{n-k}).
$$
Thus $\gcd(F_k,F_{k+1})=1$ admits $F_{k+1}\mid F_k+F_{n-k-1}$.
The periodicity of $(F_u)_{u=0}^\infty$ modulo $F_{k+1}$ guarantees, together with the bounds on $n-k-1$ that
$$
F_k+F_{n-k-1}\equiv F_k+F_jF_k=F_k(F_j+1)\pmod{F_{k+1}}
$$
holds for some $j=1,2,\dots,k/2-3$.
Consequently, $F_{k+1}\mid F_j+1$, a contradiction.
\bigskip

{\bf Case II: $k$ is odd.} Again $k+2< n-k-1\le3k/2-2$ holds. Lemma \ref{lfp} and Lemma \ref{l1}.8 imply
$$
F_k(kF_{k+1}-F_k)=F_{(n-\ve)/2}L_{(n+\ve)/2}=(F_{k+1}F_{(n-\ve)/2-k}+F_kF_{(n-\ve)/2-k-1})L_{(n+\ve)/2},
$$
where $\ve\in\{\pm1,\pm2\}$ according to the modular property of $n$. It leads to
$$
F_{k+1}(kF_k-F_{(n-\ve)/2-k}L_{(n+\ve)/2})=F_k(F_k+F_{(n-\ve)/2-k-1}L_{(n+\ve)/2}).
$$
Thus $F_k\mid F_{(n-\ve)/2-k}L_{(n+\ve)/2}$.

By (\ref{Dor}) it is obvious that 
$$
k<k+1\le\frac{n-2}{2}\le\frac{n+\ve}{2}\le\frac{n+2}{2}\le \frac{5}{4}k+1<2k,
$$
which exclude $\gcd(k,(n+\ve)/2)=k$. Thus  $\gcd(k,(n+\ve)/2)\le k/3$ since $k$ is odd, subsequently $\gcd(F_k,L_{(n+\ve)/2})\le L_{k/3}$.
On the other hand
$$
\frac{n-\ve}{2}-k\le \frac{n+2}{2}-k\le \frac{1}{4}k+1,
$$
which implies $\gcd(F_k,F_{(n-\ve)/2-k})\le F_{k/4+1}$.

Thus, $F_k\mid F_{(n-\ve)/2-k}L_{(n+\ve)/2}$, together with the previous arguments entails $F_k\le F_{k/4+1}L_{k/3}$, but it leads to a contradiction since $L_{k/3}=F_{k/3-1}+F_{k/3+1}<2F_{k/3+1}$, and the application of Corollary \ref{co} on $F_k\le 2F_{k/4+1}F_{k/3+1}$ returns with $k<9$.

\bigskip

{\bf Acknowledgments.} This paper was partially written when the third author visited the Department of Mathematics of Uluda\u{g} University in Bursa. He would like to thank the Turkish colleagues of the department for the kind hospitality. The first author was supported by the Research Fund of Uluda\u{g} University under project numbers: 2015/23, 2016/9.

\end{document}